\DeclareMathOperator{\Supp}{Supp}
\DeclareMathOperator{\sepgon}{sepgon}
 \numberwithin{equation}{section}
\DeclareMathOperator{\Ext}{Ext} \DeclareMathOperator{\Proj}{Proj}
\newtheorem{theorem}{Theorem}[section]
\newtheorem{definition}[theorem]{Definition}
\newtheorem{proposition}[theorem]{Proposition}
\newtheorem{lemma}[theorem]{Lemma}
\newtheorem{definition/construction}[theorem]{Definition/Construction}
\newtheorem{corollary}[theorem]{Corollary}
\theoremstyle{remark}
\newtheorem*{remark}{Remark}
\newtheorem*{problem}{Problem}
\newtheorem*{subject}{2000 Mathematics Subject Classification}
\newtheorem*{keywords}{Keywords}
\author{Marc Coppens\footnote{Katholieke Hogeschool Kempen, Departement IBW,
Kleinhoefstraat 4, B-2440 Geel, Belgium; K.U.Leuven, Departement of Mathematics,
Celestijnenlaan 200B, B-3001 Leuven, Belgium; email: marc.coppens@khk.be}}
\title{Pencils on separating $(M-2)$-curves}
\date{}
\begin{document}
\maketitle \noindent

\begin{abstract}

A separating ($M-2$)-curve is a smooth geometrically irreducible
real projective curve $X$ such that $X(\mathbb{R})$ has $g-1$
connected components and $X(\mathbb{C})\setminus X(\mathbb{R})$ is
disconnected. Let $T_g$ be a Teichm\"uller space of separating
($M-2$)-curves of genus $g$. We consider two partitions of $T_g$,
one by means of a concept of special type, the other one by means of
the separating gonality. We show that those two partitions are very
closely related to each other. As an application we obtain the
existence of real curves having isolated real linear systems
$g^1_{g-1}$ for all $g\geq 4$.

\end{abstract}

\begin{subject}
14H05; 14H51; 14P99
\end{subject}

\begin{keywords}
real curve, linear pencil, separating gonality, special type,
Teichm\"uller space
\end{keywords}

\section{Introduction}\label{section1}

Let $X$ be a smooth real projective curve of genus $g$. We assume
$X$ is complete and geometrically irreducible, hence the set
$X(\mathbb{C})$ of complex points is in a natural way a compact
Riemann surface of genus $g$. Let $X(\mathbb{R})$ be the set of real
points and assume it is not empty. Let $C_1, \cdots, C_s$ be the
connected components of $X(\mathbb{R})$. It is well-known that
$s\leq g+1$ (Harnack's inequality). Let $f:X\rightarrow
\mathbb{P}^1$ be a morphism of degree $k$. It is known that the
parity of the fibers (counted with multiplicities) of $f|_{C_i}:C_i
\rightarrow \mathbb{P}^1(\mathbb{R})$ is constant. In particular in
case this parity is odd then $f(C_i)=\mathbb{P}^1(\mathbb{R})$. In
our paper \cite{ref1} we considered the following problem.

\begin{problem} Fix $k$, $s'\leq s$ and $s'$ components $C_{i_1}, \cdots,
C_{i_{s'}}$ of $X(\mathbb{R})$. Does there exist a morphism
$f:X\rightarrow \mathbb{P}^1$ of degree $k$ such that $f$ has odd
parity on $C_j$ for $j\in \{ i_1, \cdots, i_{s'} \}$ and $f(C_j)\neq
\mathbb{P}^1(\mathbb{R})$ for $j \notin \{ i_1, \cdots, i_{s'} \}$.
\end{problem}

Of course $s-s' \equiv 0 \pmod{2}$ is a necessary condition and in
\cite{ref1}*{Proposition 1} it is proved that in case $k=g+1$ this
condition is also sufficient. However in case $k=g$ then this
condition is not sufficient because of the following example
mentioned in \cite{ref1}*{Example 3}. A real curve $X$ of genus 3
with $s=2$ and such that $X(\mathbb{R})$ disconnects $X(\mathbb{C})$
is isomorphic to a smooth plane real curve of degree 4 having two
nested ovals ($C_1$ in the inner part of $C_2$). Taking $k=3$,
$s'=1$ and $i_1=1$, then for each morphism $f:X\rightarrow
\mathbb{P}^1$ of degree 3 having odd parity on $C_1$ one has
$f(C_2)=\mathbb{P}^1(\mathbb{R})$.

A real curve $X$ such that $X(\mathbb{R})$ disconnects
$X(\mathbb{C})$ is called separating and it is shown in
\cite{ref1}*{Theorem 1.A} that the condition $s-s' \equiv 0
\pmod{2}$ is sufficient for an affirmative answer to the problem in
case $k=g$ and $X$ is not separating. In \cite{ref2}*{Example 5.9}
as a second example one finds separating curves of genus 4 with
$s=3$ such that there exist components $C_1$ and $C_2$ of
$X(\mathbb{R})$ such that for each morphism $f:X\rightarrow
\mathbb{P}^1$ of degree 4 having odd parity on $C_1$ and $C_2$ one
has $f(C_3)=\mathbb{P}^1(\mathbb{R})$ ($C_3$ is the other component
of $X(\mathbb{R})$ different from $C_1$ and $C_2$). The argument
makes use of the description of a canonically embedded curve of
genus 4 in $\mathbb{P}^3$ as the intersection of a cubic and a
quadric surface. In both examples we have $s=g-1$. Classically, a
real curve $X$ satisfying $s=g+1$ is called an $M$-curve and in the
literature a real curve satisfying $s=g+1-a$ is also called an
($M-a$)-curve. So both examples are separating ($M-2$)-curves. In
Theorem \ref{SpecialType} we prove that for all $g\geq 3$ there
exists a separating ($M-2$)-curve $X$ having components $C_1,
\cdots, C_{g-1}$ of $X(\mathbb{R})$ such that, if $f:X\rightarrow
\mathbb{P}^1$ is a morphism of degree $g$ having odd parity on $C_2,
\cdots, C_{g-1}$ then $f(C_1)=\mathbb{P}^1(\mathbb{R})$ (in this
statement the numbering of the components of $X(\mathbb{R})$ is
important). We say such a curve is of special type. Theorem
\ref{SpecialType} is a direct consequence of Proposition
\ref{Proposition1}. In Proposition \ref{Proposition1} we prove a
more geometric statement related to this concept: the existence of a
canonically embedded separating ($M-2$)-curve $X$ possessing a
strong kind of linking between the connected components of
$X(\mathbb{R})$.

We prove a stronger statement. Let $T_g$ be the Teichm\"uller space
parameterizing separating ($M-2$)-curves of genus $g$. In case $t\in
T_g$ then we write $X_t$ to denote the corresponding real curve.
This space $T_g$ is a real connected manifold of dimension $3g-3$.
We say a property $P$ holds for a general separating ($M-2$)-curve
if there exists a non-empty open subset $U$ of $T_g$ such that $P$
holds for all curves $X_t$ with $t\in U$ (roughly speaking: the
curves satisfying property $P$ have the maximal $3g-3$ moduli). From
Corollary \ref{general} it follows that for $g\geq 4$ both
properties ''being of simple type'' and ''not being of simple type''
do hold for a general separating ($M-2$)-curve of genus $g$ (in case
$g=3$ all separating ($M-2$)-curves are of special type). Let
$T_{g,s}$ (resp. $T_{g,ns}$) be the set of points $t\in T_g$ such
that $X_t$ is of special type (resp. $X_t$ is not of special type).
So we have a partition $T_g=T_{g,s}\cup T_{g,ns}$. In Lemma
\ref{SpecialTypeClosed} we show $T_{g,s}$ is closed, hence
$T_{g,ns}$ is open. This partition turns out to be closely related
to another very natural parition of $T_g$.

In case a real curve $X$ has a morphism $f:X\rightarrow
\mathbb{P}^1$ with $X(\mathbb{R})=f^{-1}(\mathbb{P}^1(\mathbb{R}))$
then $X$ is separating. Such morphism is called a separating
morphism. In \cite{ref11} we introduce the separating gonality
$\sepgon (X)$ of a separating real curve $X$: it is the minimal
degree such that there exists a separating morphism $f:X\rightarrow
\mathbb{P}^1$. For a separating ($M-2$)-curve $X$ trivially one has
$\sepgon (X)\geq g-1$. On the other hand, from \cite{ref6} it
follows $\sepgon (X)\leq g$ and in \cite{ref11} it is proved that
both possibilities $g-1$ and $g$ do occur. Let $T_{g,g}$ (resp.
$T_{g,g-1}$) be the set of points $t\in T_g$ such that $\sepgon
(X_t)=g$ (resp. $\sepgon (X_t)=g-1$). So we obtain a second
partition $T_g=T_{g,g} \cup T_{g,g-1}$ and the relation between both
partitions is given by the fact that the closure
$\overline{T_{g,ns}}$ of $T_{g,ns}$ is equal to $T_{g,g-1}$ (see
Corollary \ref{relatie3}). It follows that $T_{g,g}=T_{g,s}\setminus
(T_{g,s}\cap \overline{T_{g,ns}})$ is a non-empty open subset of
$T_g$.

The fibers of a separating morphism $f:X\rightarrow \mathbb{P}^1$ of
degree $g-1$ correspond to a linear system $g^1_{g-1}$ on $X$.
Complete linear systems of degree $g-1$ and dimension at least one
on $X$ are parameterized by a subscheme $W^1_{g-1}$ of the Jacobian
$J(X)$ and in case $X$ is not hyperelliptic then all components of
$W^1_{g-1}(\mathbb{C})$ have dimension $g-4$. Linear systems
$g^1_{g-1}$ corresponding to separating morphisms of degree $g-1$ on
a separating ($M-2$)-curve $X$ are parameterized by a dense open
subset of some irreducible components of $W^1_{g-1}(\mathbb{R})$. In
case $X$ is a general non-special separating ($M-2$)-curve then all
such components have real dimension $g-4$. If $X$ is a special
separating ($M-2$)-curve with $\sepgon (X)=g-1$ then our results
imply $X=X_t$ for some $t\in T_{g,s}\cap \overline{T_{g,ns}}$. In
Corollary \ref{ExistenceSpecialSepgon} we prove this intersection is
non-empty and in Proposition \ref{Lemma1} we prove such $X$ has
finitely many $g^1_{g-1}$ associated to separated morphisms of
degree $g-1$. In particular for such curve $W^1_{g-1}(\mathbb{R})$
has isolated points (see Corollary \ref{corollary1}). In case $g\geq
5$ this is remarkable when compared to $\dim
(W^1_{g-1}(\mathbb{C}))=g-4$. The finiteness follows from the
following remarkable fact proved in Proposition \ref{Lemma1}. If $X$
is an ($M-2$)-curve of special type then a linear system $g^1_{g-1}$
on $X$ corresponding to a separated morphism $f:X\rightarrow
\mathbb{P}^1$ is half-canonical.

\section{Preliminaries and notations}\label{section2}

A \emph{real curve} $X$ is a one-dimensional geometrically connected
projective variety defined over the field $\mathbb{R}$ of the real
numbers. Using a base extension $\mathbb{R}\subset \mathbb{C}$ we
obtain a complex curve $X_{\mathbb{C}}$. Its set of closed points is
denoted by $X(\mathbb{C})$ and it is called the space of complex
points on $X$. Complex conjugation related to $\mathbb{R}\subset
\mathbb{C}$ defines a complex conjugation on $X(\mathbb{C})$, for
$P\in X(\mathbb{C})$ we write $\overline{P}$ to denote the complex
conjugated point. On $X$ itself (considered as a scheme) there are
two types of closed points according to the residu field being
$\mathbb{R}$ or $\mathbb{C}$. In case the residu field is
$\mathbb{R}$ then we say it is a \emph{real point} on $X$. The set
of real points is denoted by $X(\mathbb{R})$ and there exists a
natural inclusion $X(\mathbb{R})\subset X(\mathbb{C})$. In case the
residu field is $\mathbb{C}$ then the closed point on $X$
corresponds to two conjugated points $P$, $\overline{P}$ on
$X(\mathbb{C})\setminus X(\mathbb{R})$. Such closed point on $X$ is
denoted by $P+\overline{P}$ and it is called a \emph{non-real point}
on $X$. The real projective line $\Proj (\mathbb{R}[X_0,X_1])$ is
denoted by $\mathbb{P}^1$. A linear system of dimension $r$ and
degree $d$ on a smooth real curve $X$ is denoted by $g^r_d$. It is a
projective space of linearly equivalent real divisors on $X$.

In case $X_{\mathbb{C}}$ is a smooth (resp. stable) complex curve we
call $X$ a smooth (resp. stable) real curve. The moduli functor of
stable curves of genus $g$ is not representable, hence there is no
universal family. Instead we make use of so-called suited families
of stable curves.

\begin{definition}
Let $X$ be a real stable curve of genus $g$. A \emph{suited family
of stable curves of genus $g$ for $X$} is a projective morphism $\pi
: \mathcal{C} \rightarrow S$ defined over $\mathbb{R}$ such that

\begin{enumerate}
\item
$S$ is smooth, geometrically irreducible and quasi-projective.
\item
Each geometric fiber of $\pi$ is a stable curve of genus $g$.
\item
For each $s\in S(\mathbb{C})$ the Kodaira-Spencer map
$T_s(S)\rightarrow \Ext ^1(\Omega _{\pi ^{-1}(s)},\mathcal{O}_{\pi
^{-1}(s)})$ is surjective (here $\Omega _{\pi ^{-1}(s)}$ is the
sheaf of K\"ahler differentials).
\item
There exists $s_0 \in S(\mathbb{R})$ such that $\pi ^{-1}(s_0)\cong
X$ over $\mathbb{R}$.
\end{enumerate}

In case $X$ is smooth we also assume $\pi $ is a smooth morphism.

\end{definition}

In \cite{ref11}*{Lemma 4} it is explained such suited families do
exist. Let $X$ be a smooth real curve and let $\pi :
\mathcal{C}\rightarrow S$ be a suited family for $X$. Let $k\in
\mathbb{Z}$ with $k\geq 2$. There exists a quasi-projective morphism
$\pi _k: H_k(\pi ) \rightarrow S$ representing morphisms of degree
$k$ from fibers of $\pi$ to $\mathbb{P}^1$ (see
\cite{ref12}*{Section 4.c}). Let $f:X \rightarrow \mathbb{P}^1$ be a
morphism of degree $k$. It defines an invertible sheaf
$L=f^*(\mathcal{O}_{\mathbb{P}^1}(1))$ of degree $k$ on $X$. The
morphism $f$ induces an exact sequence $0 \rightarrow T_X
\rightarrow f^*(T_{\mathbb{P}^1}) \rightarrow N_f \rightarrow 0$
($N_f$ is defined by this exact sequence) and since
$T_{\mathbb{P}^1}\cong \mathcal{O}_{\mathbb{P}^1}(2)$ this exact
sequence looks like

\begin{equation*}
0 \rightarrow T_X \rightarrow L^{\otimes 2} \rightarrow N_f
\rightarrow 0
\end{equation*}

The morphism $f$ corresponds to a point $[f]$ on $H_k(\pi )$ and
from Horikawa's deformation theory of holomorphic maps (see
\cite{ref13}, see also \cite{ref14}*{3.4.2}), it follows
$T_{[f]}(H_k(\pi ))$ is canonically identified with $H^0(X,N_f)$ and
since $H^1(X,N_f)=0$ it follows $H_k(\pi )$ is smooth of dimension
$2k+2g-2$. Moreover $T_{s_0}(S)$ is isomorphic to $H^1(X,T_X)$ and
the connecting homomorphism $H^0(X,N_f)\rightarrow H^1(X,T_X)$
associated to the exact sequence is identified with the tangent map
$d_{[f]}(\pi _k):T_{[f]}(H_k(\pi ))\rightarrow T_{s_0}(S)$. In
particular $d_{[f]}(\pi _k)$ is surjective in case $H^1(X,L^{\otimes
2})=0$. Hence the condition $H^1(X,L^{\otimes 2})=0$ implies $\pi
_k^{-1}(s_0)$ has dimension $2k-g+1$ and it is smooth at $[f]$. In
\cite{ref7} we introduced the topological degree of $f$. Choose an
orientation on $\mathbb{P}^1(\mathbb{R})$. For each component $C$ of
$X(\mathbb{R})$ (this is a smooth real manifold diffeomorphic to
$S^1$) we consider the restriction $f| _C : C \rightarrow
\mathbb{P}^1(\mathbb{R})$ and we fix an orientation on $C$ such that
$\deg (f| _C)\geq 0$. We say $f$ has \emph{of topological degree
$(d_1, \cdots , d_s)$} with $d_1 \geq \cdots \geq d_s \geq 0$ if
there is a numbering $C_1, \cdots , C_s$ of all components of
$X(\mathbb{R})$ such that $\deg (f| _{C_i})=d_i$. In families of
morphisms from smooth real curves to $\mathbb{P}^1$ this topological
degree is constant, hence it is constant on connected components of
$H_k(\pi )(\mathbb{R})$.

Let $X$ be a smooth real curve. In case $X(\mathbb{R})\neq
\emptyset$ then it is a disjoint union of $s=s(X)$ connected
components diffeomorphic to a circle. In case
$X(\mathbb{C})\setminus X(\setminus\mathbb{R})$ is not connected it
has two connected components and $X$ is called a \emph{separating
real curve}. For a separating real curve one has $1\leq s\leq g-1$
and $s \equiv g+1 \pmod{2}$. In case $s=g+1-a$ then $X$ is called an
\emph{($M-a$)-curve}. The following definitions are already
mentioned at the introduction.

\begin{definition}
A separating ($M-2$)-curve $X$ is of \emph{special type} if there
exists a component $C$ of $X(\mathbb{R})$ such that for each
morphism $f:X\rightarrow \mathbb{P}^1$ of degree $g$ having odd
parity on each connected component $C'\neq C$ of $X(\mathbb{R})$ one
has $f(C)=\mathbb{P}^1(\mathbb{R})$. If no such component $C$ exists
then we say $X$ is \emph{not of special type}.
\end{definition}

\begin{definition}
A morphism $f:X\rightarrow \mathbb{P}^1$ is called \emph{a
separating morphism} if
$f^{-1}(\mathbb{P}^1(\mathbb{R}))=X(\mathbb{R})$.
\end{definition}

In case $X$ has a separating morphism then $X$ is a separating real
curve.

\begin{definition}
The \emph{separating gonality} $\sepgon (X)$ of a separating real
curve $X$ is the minimal degree $k$ such that there exists a
separating morphism $f:X\rightarrow \mathbb{P}^1$ of degree $k$.
\end{definition}

As already mentioned in the introduction, in case $X$ is a
separating ($M-2$)-curve then $\sepgon (X)$ is either $g$ or $g+1$.
As mentioned in the introduction we write $T_g$ to denote a
Teichm\"uller space parameterizing separating ($M-2$)-curves and we
obtain two partitions $T_g=T_{g,s}\cup T_{g,ns}$ and
$T_g=T_{g,g}\cup T_{g,g-1}$. Remember $T_g$ is a smooth real
manifold of dimension $3g-3$ and it has a universal family
$t_g:\mathcal{X}_g \rightarrow T_g$. For each separating real
($M-2$)-curve $X_0$ there exists $t_0\in T_g$ such that
$t_g^{-1}(t_0)\cong X_0$. Moreover, if $\pi : \mathcal{C}_g
\rightarrow S$ is a suited family of curves for $X_0$ and $s_0\in
S(\mathbb{R})$ with $\pi ^{-1}(s_0)\cong X_0$ then there exist
neighborhoods $U$ (resp. $V$) of $t_0$ (resp. $s_0$) in $T_g$ (resp.
$S(\mathbb{R})$) and a diffeomorphism $U\rightarrow V$ such that, if
$u\in U$ maps to $v\in V$ then $t_g^{-1}(u)\cong \pi ^{-1}(v)$.

\begin{lemma}\label{coverings}
Let $X$ be a separating ($M-2$)-curve, let $C_1, \cdots, C_{g-1}$ be
the connected components of $X(\mathbb{R})$ and assume
$f:X\rightarrow \mathbb{P}^1$ is a covering of degree $g$ having odd
parity on $C_1, \cdots, C_{g-2}$. Then $f(C_{g-1})\neq
\mathbb{P}^1(\mathbb{R})$ unless $f| _{C_{g-1}}$ is an unramified
covering $C_{g-1} \rightarrow \mathbb{P}^1(\mathbb{R})$ of degree 2.
\end{lemma}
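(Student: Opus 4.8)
The plan is to run a count of real preimages over regular values and then localize at any real ramification point. First I would fix a regular value $p\in\mathbb{P}^1(\mathbb{R})$ of $f$ and, for $1\le i\le g-1$, let $n_i(p)$ be the number of points of $f^{-1}(p)$ lying on $C_i$ (all simple, since $p$ is regular). Because $f$ has odd parity on $C_1,\dots,C_{g-2}$, each $n_i(p)$ with $i\le g-2$ is odd, hence $n_i(p)\ge 1$. Since the non-real points of $f^{-1}(p)$ occur in conjugate pairs, the real count $r(p)=\sum_{i=1}^{g-1}n_i(p)$ satisfies $r(p)\le g$ and $r(p)\equiv g\pmod 2$.

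From $\sum_{i\le g-2}n_i(p)\ge g-2$ and $r(p)\le g$ the value $n_{g-1}(p)$ is squeezed into $\{0,2\}$: the degree bound forces $n_{g-1}(p)\le 2$, and the congruences $r(p)\equiv g$, $\sum_{i\le g-2}n_i(p)\equiv g-2\equiv g\pmod 2$ force it to be even. The decisive observation I would then make is that if $f(C_{g-1})=\mathbb{P}^1(\mathbb{R})$, every regular value is hit, so $n_{g-1}(p)\ge 1$ and therefore $n_{g-1}(p)=2$. Equality in $2=n_{g-1}(p)=r(p)-\sum_{i\le g-2}n_i(p)\le g-(g-2)$ simultaneously forces $r(p)=g$ and $n_i(p)=1$ for all $i\le g-2$; in particular the generic fibre over $\mathbb{P}^1(\mathbb{R})$ is totally real.

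It then remains to upgrade the identity $n_{g-1}(p)=2$ to an unramified covering of degree $2$, for which I must rule out any ramification of $f$ on $C_{g-1}$. The idea is local: a critical point of $f|_{C_{g-1}}$ is exactly a ramification point $q\in C_{g-1}$ of the complex map, with a real normal form $f(z)-p_0=c\,(z-q)^e$, $c\in\mathbb{R}^*$, $e\ge 2$, where $p_0=f(q)$. For small real $t$ of a suitable sign the equation $c(z-q)^e=t$ acquires a conjugate pair of non-real roots near $q$, so the nearby regular real fibres fail to be totally real, contradicting the reality just established. Hence $f$ is unramified along $C_{g-1}$, so $f|_{C_{g-1}}$ is a local diffeomorphism everywhere, i.e. a covering, and the constant fibre count $2$ makes it a covering of degree $2$.

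I expect the genuinely delicate point to be this last step: converting the global reality statement $r(p)=g$ into local non-ramification of $f$ on $C_{g-1}$, and checking uniformly that both even and odd ramification indices produce non-real nearby fibres (even indices through the roots for $t/c<0$, odd indices through the $e-1$ complex $e$-th roots for either sign of $t$). By contrast, the counting in the first two paragraphs should be routine once one records that odd parity gives $n_i\ge 1$ and that degree $g$ pins $n_{g-1}(p)$ between $0$ and $2$.
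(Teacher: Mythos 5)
Your proof is correct and follows essentially the same route as the paper: the parity/degree count over real fibres pins the contribution of $C_{g-1}$ to $0$ or $2$, and a local analysis at a ramification point on $C_{g-1}$ produces nearby real fibres containing non-real points, which is incompatible with $f(C_{g-1})=\mathbb{P}^1(\mathbb{R})$. The only cosmetic difference is that the paper argues contrapositively and disposes of ramification index $>2$ separately via the degree bound, whereas you treat all indices $e\ge 2$ uniformly through the local normal form.
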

\begin{proof}
First of all, the morphism $f$ has even parity on $C_{g-1}$ (because
of the necessary condition involving $s$ and $s'$ for the problem
mentioned in the introduction). Since each fiber above a point $x$
of $\mathbb{P}^1(\mathbb{R})$ contains a point of $C_i$ for $1\leq
i\leq g-2$ it contains at most 2 points of $C_{g-1}$ (counted with
multiplicities) and there cannot be a ramification point on
$C_{g-1}$ of index more than two. If there is a ramification point
$x_0$ on $C_{g-1}$ of index two then close to $f(x_0)$ there exists
$x'\in \mathbb{P}^1(\mathbb{R})$ such that $f^{-1}(x')$ contains a
non-real point. It follows $f^{-1}(x')$ cannot contain a point of
$C_{g-1}$ hence $f(C_{g-1})\neq \mathbb{P}^1(\mathbb{R})$. Hence
$f(C_{g-1})=\mathbb{P}^1(\mathbb{R})$ implies $f$ has no
ramification point on $C_{g-1}$, hence $f| _{C_{g-1}}$ is an
unramified covering $C_{g-1} \rightarrow \mathbb{P}^1(\mathbb{R})$
of degree two.
\end{proof}

\begin{remark}
In the situation of the previous lemma, if $f
(C_{g-1})=\mathbb{P}^1(\mathbb{R})$ it follows
$f^{-1}(\mathbb{P}^1(\mathbb{R}))=X(\mathbb{R})$, hence $f$ is a
separating morphism of degree $g$. In that case $f$ has topological
degree $(2,1, \cdots, 1)$. In case $f(C_{g-1})\neq
\mathbb{P}^1(\mathbb{R})$ it has topological degree $(1, \cdots, 1,
0)$.
\end{remark}

\begin{lemma}\label{SpecialTypeClosed}
$T_{g,ns}\subset T_g$ is open and (hence) $T_{g,s}\subset T_g$ is
closed.
\end{lemma}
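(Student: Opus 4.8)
The plan is to prove that $T_{g,ns}$ is open; the complementary claim that $T_{g,s}=T_g\setminus T_{g,ns}$ is closed then follows at once. First I would reformulate the failure of special type in terms of the topological degree. By Lemma \ref{coverings} and the subsequent remark, if $C_1,\dots,C_{g-1}$ are the components of $X(\mathbb{R})$ and $f:X\rightarrow\mathbb{P}^1$ has degree $g$ with odd parity on $C_1,\dots,C_{g-2}$, then either $f(C_{g-1})=\mathbb{P}^1(\mathbb{R})$ and $f$ has topological degree $(2,1,\dots,1)$, or $f(C_{g-1})\neq\mathbb{P}^1(\mathbb{R})$ and $f$ has topological degree $(1,\dots,1,0)$. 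Hence $X$ fails to be of special type exactly when, for every component $C$ of $X(\mathbb{R})$, there is a degree-$g$ morphism $f_C$ of topological degree $(1,\dots,1,0)$ whose vanishing component is $C$, i.e.\ $f_C$ has odd parity on the other components and $f_C(C)\neq\mathbb{P}^1(\mathbb{R})$.

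I would then fix $t_0\in T_{g,ns}$, write $X_0=X_{t_0}$, and choose a suited family $\pi:\mathcal{C}\rightarrow S$ for $X_0$ with $\pi^{-1}(s_0)\cong X_0$. By the local diffeomorphism between neighborhoods of $t_0$ in $T_g$ and of $s_0$ in $S(\mathbb{R})$ recalled just before Lemma \ref{coverings}, it suffices to produce a neighborhood $V$ of $s_0$ in $S(\mathbb{R})$ over which every fiber fails to be of special type. For each component $C$ the witnessing morphism $f_C$ gives a real point $[f_C]\in H_g(\pi)(\mathbb{R})$ over $s_0$. The crucial numerical point is that $L=f_C^*(\mathcal{O}_{\mathbb{P}^1}(1))$ has degree $g$, so $\deg(L^{\otimes 2})=2g$ and by Serre duality $h^1(X_0,L^{\otimes 2})=h^0(X_0,K_{X_0}\otimes L^{\otimes(-2)})=0$, since $K_{X_0}\otimes L^{\otimes(-2)}$ has degree $2g-2-2g=-2<0$. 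Thus, by the deformation theory recalled in Section \ref{section2}, $H_g(\pi)$ is smooth at $[f_C]$ and the tangent map $d_{[f_C]}(\pi_g)$ is surjective, so $\pi_g$ is a submersion near $[f_C]$.

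Since a submersion of real manifolds admits local sections, I obtain a connected neighborhood $W_C$ of $[f_C]$ in $H_g(\pi)(\mathbb{R})$, a neighborhood $V_C$ of $s_0$, and a section $\sigma_C:V_C\rightarrow W_C$ with $\sigma_C(s_0)=[f_C]$ such that $\sigma_C(s)$ is a degree-$g$ morphism on $\pi^{-1}(s)$. As the topological degree is constant on connected components of $H_g(\pi)(\mathbb{R})$, every $\sigma_C(s)$ has topological degree $(1,\dots,1,0)$, and since the real components of the fibers vary continuously, the component carrying topological degree $0$ is the one specializing to $C$; so $\sigma_C(s)$ witnesses that $\pi^{-1}(s)$ is not of special type with respect to that component. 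Setting $V=\bigcap_C V_C$, a finite intersection as $X_0(\mathbb{R})$ has $g-1$ components, every fiber over $V$ fails to be of special type with respect to each of its components and is therefore not of special type. Transporting $V$ through the diffeomorphism yields a neighborhood of $t_0$ contained in $T_{g,ns}$.

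The main obstacle I anticipate is the bookkeeping in the last step: one must make precise that the components of $\pi^{-1}(s)(\mathbb{R})$ depend continuously on $s$, so that ``the component specializing to $C$'' is well defined on a possibly shrunk $V_C$, and that the label of the vanishing entry of the (sorted) topological degree tracks this specialization. This is where the constancy of the topological degree on connected components of $H_g(\pi)(\mathbb{R})$ must be combined with the fact that over a small enough neighborhood the real components of the fibers form a trivial circle bundle; once this identification is in place, the finite intersection over the $g-1$ components closes the argument.
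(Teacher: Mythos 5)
Your proposal is correct and follows essentially the same route as the paper: deform the witnessing degree-$g$ morphisms of topological degree $(1,\dots,1,0)$ in the relative space $H_g(\pi)$, use $H^1(X,L^{\otimes 2})=0$ to get surjectivity of the tangent map of $\pi_g$ (hence that nearby fibers carry such morphisms), invoke the constancy of the topological degree on connected components of $H_g(\pi)(\mathbb{R})$ together with Lemma \ref{coverings}, and intersect the finitely many neighborhoods obtained from the $g-1$ components. The only differences are cosmetic: you phrase the surjectivity as the existence of local sections and make the Serre-duality computation of $h^1(L^{\otimes 2})$ explicit, where the paper simply asserts it.
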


\begin{proof}
We are going to prove that $T_{g,ns}\subset T_g$ is open. Let $t\in
T_{g,ns}$ and let $X=t_g^{-1}(t)$. Let $\pi : \mathcal{C}\rightarrow
S$ be a suited family for $X$ and $s\in S(\mathbb{R})$ such that
$\pi ^{-1}(s)\cong X$. It is enough to prove there exists a
classical open neighborhood $U$ of $s$ in $S(\mathbb{R})$ such that
for all $s'\in U$ the curve $\pi ^{-1}(s')$ is a separating
($M-2$)-curve not of special type. It is well-known that points in
$S(\mathbb{R})$ close to $s$ do correspond to separating
($M-2$)-curves, so we only have to show they are also of non-special
type.

Choose a component $C$ of $X$. Since the curve is not of special
type there exists a covering $f:X \rightarrow \mathbb{P}^1$ of
degree $g$ such that it has topological degree $(1, \cdots, 1, 0)$
and $f(C)\neq \mathbb{P}^1(\mathbb{R})$. Consider $\pi _g:H_g(\pi)
\rightarrow S$ with $H_g(\pi )$ parameterizing morphisms of degree
$g$ from fibers of $\pi$ to $\mathbb{P}^1$ and now let $H$ be the
connected component of $H_g(\pi )(\mathbb{R})$ containing $[f]$.
From the deformation theory of Horikawa we know $H$ is smooth of
dimension $4g-2$. Moreover, $f$ corresponds to an invertible sheaf
$L$ of degree $g$, therefore $H^1(X,L^{\otimes 2})=0$, hence the
description of the tangent map of $\pi _g$ at $[f]$ implies this
tangent map has maximal rank. So the image of a neighborhood of
$[f]$ on $H$ contains a neighborhood $U$ of $s$ in $S$. Intersecting
those neighborhoods for all choices of $C$ (again denoted by $U$) we
obtain for each $s'\in U$ and for each component $C'$ of
$\pi^{-1}(s')(\mathbb{R})$ the existence of a morphism $f' : \pi
^{-1}(s')\rightarrow \mathbb{P}^1$ of topological degree $(1,
\cdots, 1, 0)$ having even parity on $C'$, hence $f'(C')\neq
\mathbb{P}^1(\mathbb{R})$ because of Lemma \ref{coverings}. This
means $\pi^{-1}(s')$ is not of special type.
\end{proof}

\begin{lemma}\label{SepgonClosed}
$T_{g,g-1}\subset T_g$ is closed and (hence) $T_{g,g}\subset T_g$ is
open.
\end{lemma}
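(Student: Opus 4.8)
The plan is to prove that $T_{g,g-1}$ is closed in $T_g$; the openness of $T_{g,g}$ then follows since $T_g=T_{g,g}\cup T_{g,g-1}$ is a partition. Fix $t_0\in\overline{T_{g,g-1}}$ and a suited family $\pi:\mathcal{C}\rightarrow S$ for $X_0=X_{t_0}$ with $s_0\in S(\mathbb{R})$ and $\pi^{-1}(s_0)\cong X_0$; using the local diffeomorphism between a neighborhood of $t_0$ in $T_g$ and a neighborhood of $s_0$ in $S(\mathbb{R})$, it suffices to take a sequence $s_n\to s_0$ in $S(\mathbb{R})$ with $\sepgon(\pi^{-1}(s_n))=g-1$ and to show this forces $\sepgon(X_0)=g-1$. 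The first thing I would record is a clean characterization of the relevant morphisms: a morphism $f$ of degree $g-1$ from a separating $(M-2)$-curve is separating if and only if it has topological degree $(1,\dots,1)$. Indeed, for a separating morphism every real fiber is totally real of cardinality $g-1$, so the $g-1$ real components must each contribute exactly one point (a component of topological degree $0$ would fold onto a proper arc and overload the fibers over its interior); conversely, topological degree $(1,\dots,1)$ makes each component map diffeomorphically onto $\mathbb{P}^1(\mathbb{R})$, so a generic real fiber already contains $g-1$ real points and, the degree being $g-1$, the full fiber is real. In particular, any morphism of topological degree $(1,\dots,1)$ has degree at least $g-1$.

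For each $n$ choose a separating morphism $f_n:\pi^{-1}(s_n)\rightarrow\mathbb{P}^1$ of degree $g-1$; it has topological degree $(1,\dots,1)$ and determines a line bundle $L_n$ of degree $g-1$ together with the two-dimensional subspace $V_n=f_n^*H^0(\mathbb{P}^1,\mathcal{O}(1))\subseteq H^0(\pi^{-1}(s_n),L_n)$. The pairs $(L_n,V_n)$ are real points of the relative variety $G^1_{g-1}$ parameterizing a line bundle of degree $g-1$ together with a pencil in its sections; this variety is proper over $S$, being a closed subscheme of a Grassmann bundle over the relative Picard scheme $\Pic^{g-1}_{\mathcal{C}/S}$, which is proper over $S$. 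Hence the real locus of $G^1_{g-1}$ is compact over $S(\mathbb{R})$, and after passing to a subsequence the $(L_n,V_n)$ converge to a real point $(L_0,V_0)$ lying over $s_0$, with $L_0$ of degree $g-1$ on $X_0$ and $\dim V_0=2$.

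It remains to show that $V_0$ is base-point-free, so that it defines a genuine morphism $f_0:X_0\rightarrow\mathbb{P}^1$ of degree $g-1$; this is the step I expect to be the main obstacle, since a priori the limiting pencil could acquire base points and drop in degree. Here the topological-degree characterization does the work. Since $\pi$ is smooth the real loci vary smoothly and the $f_n$ converge, away from the base locus of $V_0$, to the morphism $f_0$ defined by the moving part of $V_0$; as the topological degree is locally constant and integer-valued, $f_0$ again has topological degree $(1,\dots,1)$ on $X_0(\mathbb{R})$. If $V_0$ had a base point (real, or a conjugate non-real pair), then $f_0$ would have degree strictly less than $g-1$ while still having topological degree $(1,\dots,1)$, contradicting the fact recorded above that such a morphism has degree at least $g-1$. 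Therefore $V_0$ is base-point-free and $f_0$ is a morphism of degree $g-1$ and topological degree $(1,\dots,1)$, hence a separating morphism by the characterization (consistently with Lemma \ref{coverings}). Thus $\sepgon(X_0)\leq g-1$, and since $\sepgon(X_0)\geq g-1$ always holds for a separating $(M-2)$-curve, we get $\sepgon(X_0)=g-1$, i.e. $t_0\in T_{g,g-1}$. This proves $T_{g,g-1}$ is closed, and hence $T_{g,g}$ is open.
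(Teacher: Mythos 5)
Your overall strategy---compactify in the relative $G^1_{g-1}$ over $S$, extract a limit pencil $(L_0,V_0)$, and then show it is base-point-free---is sound, and the properness step is fine; the paper does the same thing at the level of divisors (the limit of the totally real fibers $P_1+\cdots+P_{g-1}$ lies in a complete $g^r_{g-1}$ with $r\geq 1$ by semicontinuity). The gap is exactly where you anticipated trouble: base-point-freeness. Your argument there is that $f_n\to f_0$ away from the base locus of $V_0$ and that ``the topological degree is locally constant,'' so $f_0$ still has topological degree $(1,\dots,1)$. But topological degree is only constant along continuous families of \emph{morphisms} (connected components of $H_k(\pi)(\mathbb{R})$); when a real base point forms on a component $C_i$, the convergence $f_n|_{C_i}\to f_0|_{C_i}$ holds only away from that point, and the winding number drops there (the bubble at the base point absorbs a unit of degree). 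This is not a hypothetical failure: it is precisely the mechanism used in the proof of Theorem \ref{Proposition3}, where a pencil of topological degree $(2,1,\dots,1)$ degenerates to one with a real base point on $C_1$ whose moving part has topological degree $(1,\dots,1)$. So your contradiction (``degree $<g-1$ yet topological degree $(1,\dots,1)$'') evaporates in the only dangerous case, a real base point; your argument does correctly exclude a conjugate pair of non-real base points, since there the convergence is uniform on all of $X_0(\mathbb{R})$.

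The missing ingredient is a constraint on the real divisors of the limit system. Since the limit divisor is itself of the form $P_1+\cdots+P_{g-1}$ with $P_i\in C_i$, the complete system $|L_0|$ has odd parity on each of the $g-1$ components; as its total degree is $g-1$, every real divisor in it consists of exactly one point on each component. This forces $\dim|L_0|=1$ (a $g^r_{g-1}$ with $r\geq 2$ would contain a divisor through two prescribed points of $C_1$) and rules out base points (a base point $P_1\in C_1$ together with a divisor through a general $P_1'\in C_1$ would again put two points on $C_1$). That counting argument is the paper's proof of the lemma, and it is what needs to replace your topological-degree claim.
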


\begin{proof}

Let $X_0$ be a curve corresponding to a point on the closure of
$T_{g,g-1}$. Then $X_0$ is the limit of a family of separating
($M-2$)-curves $X_t$ ($t>0$) having a separating morphism $f_t:X_t
\rightarrow \mathbb{P}^1$ of degree $g-1$. Since $X_t(\mathbb{R})$
has $g-1$ components such morphism has to be of topological type
$(1, \cdots, 1)$. Therefore the fiber of $f_t$ over a real point of
$\mathbb{P}^1$ is of type $P_1 + \cdots + P_{g-1}$ with $P_i$
belonging to different components of $X_t(\mathbb{R})$. The limit of
such divisor on $X_0$ is of the same type and belongs to a complete
linear system of dimension at least 1. So it defines a complete
linear system $g^r_{g-1}$ for some $r\geq 1$ having odd degree on
each component $C$ of $X_0(\mathbb{R})$. In case $r> 1$ then for
$P_1, P'_1$ on the same component $C$ of $X_0(\mathbb{R})$ there
should exist $D\in g^r_{g-1}$ containing $P_1+P'_1$. Since $D$
should contain a point of each component of $X_0(\mathbb{R})$, this
is impossible. So $r=1$. In case $D$ would have a base point (say
$P_1$) then for $P'_1$ general on the same component there should
exist $D\in g^1_{g-1}$ containing $P_1+P'_1$ giving the same
contradiction. So $g^1_{g-1}$ corresponds to a base point free
linear system having odd degree on each component of
$X_0(\mathbb{R})$, so it defines a separating morphism $f_0:X_0
\rightarrow \mathbb{P}^1$ of degree $g-1$.
\end{proof}

\section{Existence of separating
($M-2$)-curves of special type}\label{section3}

\begin{theorem}\label{SpecialType}
For each $g\geq 3$ there exists a separating (M-2)-curve $X$ of
special type.
\end{theorem}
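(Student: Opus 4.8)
The plan is to prove Theorem \ref{SpecialType} as a consequence of a more concrete geometric construction, which the introduction tells us is Proposition \ref{Proposition1}. Let me think about how I would build a separating ($M-2$)-curve of special type directly, so that the statement follows.

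The key structural facts I have available:
- A separating ($M-2$)-curve has $s = g-1$ components of $X(\mathbb{R})$.
- Lemma \ref{coverings}: if $f$ has degree $g$ with odd parity on $C_1,\dots,C_{g-2}$, then $f(C_{g-1}) \neq \mathbb{P}^1(\mathbb{R})$ UNLESS $f|_{C_{g-1}}$ is an unramified double cover of $\mathbb{P}^1(\mathbb{R})$.
- So "special type" means: there's a component $C$ such that for EVERY degree-$g$ map with odd parity on the other components, $f|_C$ IS forced to be that unramified double cover (topological degree $(2,1,\dots,1)$, a separating morphism).

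So to show special type, I need to rule out the "escape" where $f(C) \neq \mathbb{P}^1(\mathbb{R})$, i.e., topological degree $(1,\dots,1,0)$. Equivalently: I need a curve where no degree-$g$ map has topological degree $(1,\dots,1,0)$ with the zero on the distinguished component $C$.

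The examples cited (genus 3 plane quartic with nested ovals, genus 4 canonical curve as cubic∩quadric) suggest the method: realize $X$ canonically embedded in $\mathbb{P}^{g-1}$ and analyze the $g^1_g$'s via the geometry. A degree-$g$ pencil $g^1_g$ corresponds to a $(g-2)$-dimensional linear subspace meeting the canonical curve; the parity/topological-degree conditions translate into whether certain real linear spaces separate the ovals. The obstruction should come from a "linking" or "nesting" configuration of the real components so that any hyperplane-type section realizing odd parity on all but $C_1$ must necessarily also hit $C_1$ in the full way.

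Concretely, the proof I would write:

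\begin{proof}
By Proposition \ref{Proposition1} there exists a canonically embedded separating ($M-2$)-curve $X \subset \mathbb{P}^{g-1}$ of genus $g$ whose real components $C_1, \dots, C_{g-1}$ are linked in the strong geometric sense asserted there. We show such an $X$ is of special type, with $C_1$ playing the role of the distinguished component $C$ in the definition.

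Let $f : X \rightarrow \mathbb{P}^1$ be a morphism of degree $g$ having odd parity on each of $C_2, \dots, C_{g-1}$. We must show $f(C_1) = \mathbb{P}^1(\mathbb{R})$. By Lemma \ref{coverings} (applied with the numbering that places $C_1$ last), either $f(C_1) = \mathbb{P}^1(\mathbb{R})$, in which case we are done, or $f|_{C_1}$ has even parity and $f(C_1) \neq \mathbb{P}^1(\mathbb{R})$, so $f$ has topological degree $(1, \dots, 1, 0)$. The second alternative is the one we must exclude. The linear system $g^1_g$ defined by $f$ is residual to a pencil in the canonical system; the fibres over real points of $\mathbb{P}^1$ are real divisors meeting each of $C_2, \dots, C_{g-1}$ in an odd number of points and $C_1$ in an even number (zero for a general fibre). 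Translating this into the canonical model, the base locus of the associated pencil of hyperplanes forces a real linear space that would have to separate the component $C_1$ from the remaining configuration in a way incompatible with the linking property of Proposition \ref{Proposition1}. This contradiction excludes the topological degree $(1, \dots, 1, 0)$, so $f(C_1) = \mathbb{P}^1(\mathbb{R})$ and $X$ is of special type.
\end{proof}

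The main obstacle, and the step I would expect to occupy the real work (and which is presumably discharged inside Proposition \ref{Proposition1} rather than here), is the construction of a curve with the precise linking of the real ovals and the verification that this linking is genuinely incompatible with a pencil of topological degree $(1,\dots,1,0)$ isolating $C_1$. Translating the analytic parity data of $f|_{C_i}$ into an honest intersection-theoretic statement about real hyperplanes meeting the canonical curve—and ensuring the configuration can be realized over $\mathbb{R}$ for every $g \geq 3$, presumably by an induction or a degeneration argument that adds ovals one at a time—is where the genuine content lies. For $g = 3$ this reduces to the nested-ovals plane quartic already recalled in the introduction, which should serve as the base case.
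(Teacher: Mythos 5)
Your overall strategy coincides with the paper's: deduce Theorem \ref{SpecialType} from Proposition \ref{Proposition1}, with $C_1$ as the distinguished component. But the one step that constitutes the actual content of this deduction is missing. You write that the pencil of topological degree $(1,\dots,1,0)$ ``forces a real linear space that would have to separate the component $C_1$ from the remaining configuration in a way incompatible with the linking property,'' and you add that this verification is ``presumably discharged inside Proposition \ref{Proposition1}.'' It is not: Proposition \ref{Proposition1} only asserts the two geometric properties (1) and (2) of the canonical model, and the bridge from an \emph{arbitrary} degree-$g$ morphism $f$ with odd parity on $C_2,\dots,C_{g-1}$ to those properties is exactly what the proof of the theorem has to supply. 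The argument is a residuation plus a degree count: for a real fiber $E$ of $f$, Riemann--Roch gives $|K_X-E|\neq\emptyset$; this residual system has degree $g-2$ and odd parity on each of the $g-2$ components $C_2,\dots,C_{g-1}$, so every one of its real divisors must contain at least one point of each of these components and hence is \emph{exactly} of the form $P_2+\cdots+P_{g-1}$ with $P_i\in C_i$. Therefore $f$ is cut out by the pencil of real hyperplanes through $\langle P_2,\dots,P_{g-1}\rangle$ (using property (1) to see via geometric Riemann--Roch that $|K_X-(P_2+\cdots+P_{g-1})|$ is a pencil, so $f$'s $g^1_g$ equals it), and property (2) then says verbatim that every such hyperplane meets $C_1$, i.e.\ $f(C_1)=\mathbb{P}^1(\mathbb{R})$. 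Without this identification your ``contradiction'' is an assertion, not a proof.

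Two smaller points. First, your proof never uses condition (1) of Proposition \ref{Proposition1}, which is needed both to see that $\langle P_2,\dots,P_{g-1}\rangle$ has dimension $g-3$ (so that the residual hyperplane pencil is indeed a $g^1_g$) and to identify the possibly incomplete pencil of $f$ with the complete one. Second, the appeal to Lemma \ref{coverings} is harmless but does no work here; the dichotomy it provides is not what closes the argument.
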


This theorem is an immediate corollary of the next proposition. This
proposition shows that the components of the real locus of a
canonically embedded real curve can be strongly linked to each
other. Therefore the proposition describes the geometric reason for
the existence of separating ($M-2$)-curves of special type. It would
be interesting to obtain more information concerning the way the
components of the real locus of a canonically embedded real curve
can be linked.

For a curve $X$ embedded in some projective space $\mathbb{P}$ and
an effective divisor $E$ on $X$ we denote $\langle E \rangle$ for
the linear span: it is the intersection of hyperplanes $H$ of
$\mathbb{P}$ such that $H.X \geq E$ (and it is $\mathbb{P}$ in case
such hyperplane does not exist).

\begin{proposition}\label{Proposition1}
For all $g\geq 3$ there is a canonically embedded ($M-2$)-curve $X
\subset \mathbb{P}^{g-1}$ having real components $C_1, \cdots,
C_{g-1}$ of $X(\mathbb{R})$ such that
\begin{enumerate}
\item for all $P_i \in C_i$ ($1\leq i\leq g-1$) one has $\dim
(\langle P_1, \cdots, P_{g-1} \rangle)=g-2$
\item for all $P_i\in C_i$ ($2\leq i\leq g-1$) and for each real
hyperplane $H\subset \mathbb{P}^{g-1}$ containing $\langle P_2,
\cdots, P_{g-1} \rangle$ one has $H\cap C_1\neq \emptyset$.
\end{enumerate}
\end{proposition}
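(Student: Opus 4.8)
The plan is to first reformulate condition (2) as a statement about the topological degree of a real projection, and then reduce the whole proposition to the construction of a single suitable curve. Granting (1), for any choice $P_i\in C_i$ ($2\le i\le g-1$) the $g-2$ points $P_2,\dots,P_{g-1}$ are linearly independent (otherwise adjoining any $P_1\in C_1$ would violate (1)), so $E:=P_2+\cdots+P_{g-1}$ spans a $(g-3)$-plane $\langle E\rangle$, and (1) also forces $C_1\cap\langle E\rangle=\emptyset$. The real hyperplanes through $\langle E\rangle$ form a pencil $\cong\mathbb{P}^1(\mathbb{R})$, and sending $P_1\in C_1$ to the unique such hyperplane through $P_1$ is the restriction to $C_1$ of the linear projection with centre $\langle E\rangle$; call it $\pi_E:C_1\to\mathbb{P}^1(\mathbb{R})$, a smooth map of circles. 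Condition (2) says precisely that $\pi_E$ is surjective for every such $E$, and a continuous map $S^1\to S^1$ of nonzero topological degree is automatically surjective. Since $\pi_E$ is defined for all $E$ (because $C_1\cap\langle E\rangle=\emptyset$) and varies continuously with $(P_2,\dots,P_{g-1})$ over the connected space $C_2\times\cdots\times C_{g-1}$, its degree $d_1$ is a single integer. Thus the proposition reduces to producing, for each $g$, a canonical real $(M-2)$-curve satisfying (1) for which $d_1\neq 0$.

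A parity count pins down the target value and ties it to Lemma \ref{coverings}. Generically a real hyperplane meets each oval $C_i$ (a closed loop) in an even number of points; writing $H\cap X=D+E$ for $H\supset\langle E\rangle$, the fibre $D$ of $\pi_E$ therefore meets each $C_i$ ($i\ge 2$) in an odd number of points and meets $C_1$ in an even number. Hence $\pi_E$ has odd parity on $C_2,\dots,C_{g-1}$ and even parity on $C_1$, so $d_1$ is even, and Lemma \ref{coverings} shows that $\pi_E(C_1)=\mathbb{P}^1(\mathbb{R})$ is possible only when $\pi_E|_{C_1}$ is an unramified double cover, i.e. $|d_1|=2$. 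Consequently it suffices to construct a curve with $d_1\neq 0$, and $d_1=2$ then follows automatically.

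For the construction I would proceed by induction on $g$, organizing the real topology around a single large oval $C_1$ that surrounds the remaining ovals. The base case $g=3$ is the nested plane quartic with $C_1$ the outer oval and $C_2$ the inner one: projection from any interior point $P_2\in C_2$ sends every line through $P_2$ across the surrounding oval $C_1$ twice, so $d_1=2$, while (1) merely says two distinct points span a line. For the inductive step I would pass from a genus-$g$ example to a genus-$(g+1)$ one in moduli by gluing two real points lying inside $C_1$ to a real node and performing the smoothing that creates a bounded real oval $C_{g}$ inside $C_1$; this raises both the genus and the number of components by one, keeps the curve separating and $(M-2)$, and keeps every new oval inside $C_1$. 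Using the suited families and the Horikawa deformation theory of Section \ref{section2}, such smoothings exist over $\mathbb{R}$ with the prescribed real structure.

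It then remains to verify (1) and to propagate $d_1\neq 0$. Property (1) is general position: the locus in $C_1\times\cdots\times C_{g-1}$ where a transversal fails to be independent is closed of positive codimension, so it can be cleared by a generic choice within the smoothing, and I would check this is compatible with the nesting. For the degree, $d_1$ is a deformation invariant (it is locally constant in the real family, since $C_1$ never meets the varying centre), so the value $d_1=2$ computed on the explicit nested configuration persists to the smooth curve, giving (2). The main obstacle is the construction itself: realizing, for every $g$ at once, a smooth canonical real $(M-2)$-curve whose real scheme has the correct component count and whose outer oval $C_1$ projectively surrounds the spans $\langle E\rangle$ strongly enough to force $d_1\neq 0$, while simultaneously maintaining general position (1). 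Reconciling this global nesting with the rigidity of the canonical embedding, and controlling the real topology through each nodal smoothing, is where the genuine work lies; the topological-degree reduction above is the comparatively easy part.
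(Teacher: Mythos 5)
Your reduction of condition (2) to the statement that the projection $\pi_E\colon C_1\to\mathbb{P}^1(\mathbb{R})$ from $\langle E\rangle$ has nonzero topological degree is correct and consistent with the paper (which works with the equivalent, slightly stronger statement that every real hyperplane through $\langle E\rangle$ meets $C_1$ transversally at two points), and your base case $g=3$ is the same nested quartic the paper produces. The gap is in the inductive step, and it is not a detail you have deferred but the point where your construction actually fails. You propose to create the new oval by identifying \emph{two real points} to a real node and smoothing. The paper instead identifies a general pair of \emph{complex conjugate} points $Q,\overline{Q}$, so that the node $S$ is an \emph{isolated} point of $\Gamma_0(\mathbb{R})$ and the new oval shrinks to the single point $S$ as $t\to 0$. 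This choice is what makes the limit analysis work: the degenerate versions (P1$'$), (P2$'$) of the two conditions involve the configuration $(P_{0,1},\dots,P_{0,g-1},S)$ with $S$ the centre of the projection $\mathbb{P}^g\dashrightarrow\mathbb{P}^{g-1}$, so they follow immediately from the inductive hypothesis, and a properness argument (the bad loci are closed, proper over $I$, and empty over $t=0$) transfers them to small $t>0$. With a real node the new oval degenerates to a macroscopic arc of an old oval whose closure contains the node; the limit configurations then include ones where two of the $g$ points collide at $S$, where the span necessarily drops dimension. The bad locus is therefore \emph{nonempty} over $t=0$ and the openness/compactness argument cannot even start, nor is the "limit of the secant line" of the two colliding points well defined (it depends on the ratio of approach along the two branches).

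Two further assertions in your sketch are not arguments. First, condition (1) is not a general-position statement that can be "cleared by a generic choice": it must hold for \emph{all} $(P_1,\dots,P_{g-1})\in C_1\times\cdots\times C_{g-1}$, it is equivalent (via geometric Riemann--Roch) to the non-existence of a separating $g^1_{g-1}$, i.e.\ to $\sepgon(X)=g$, and it fails on the whole closed set $T_{g,g-1}=\overline{T_{g,ns}}$; showing the constructed curve avoids this set is precisely the content of the paper's (P1$'$)-plus-properness step. Second, "$d_1$ is a deformation invariant, so $d_1=2$ persists" is only valid within a family of smooth curves of fixed genus along which (1) already holds; propagating $d_1=2$ from the genus-$g$ curve to the genus-$(g+1)$ one across the nodal degeneration is exactly what (P2$'$) accomplishes (hyperplanes of $\mathbb{P}^g$ through $\langle P_{0,2},\dots,P_{0,g-1},S\rangle$ project from $S$ to hyperplanes of $\mathbb{P}^{g-1}$ through $\langle P_2,\dots,P_{g-1}\rangle$, to which the inductive hypothesis applies), and is not automatic. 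So the overall architecture (induction on $g$ via a nodal degeneration, plus a topological persistence argument) is the right one, but the specific degeneration you chose breaks the persistence argument, and the two conditions you label as routine are where the theorem actually lives.
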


\begin{proof}[Proof of Theorem \ref{SpecialType}]

Let $X$ be as described in Proposition \ref{Proposition1}. Take $P_i
\in C_i$ ($2\leq i\leq g-1$) and consider $|K_X-(P_2+ \cdots +
P_{g-1})|$. From (1) in Proposition \ref{Proposition1} we have $\dim
(\langle P_2, \cdots, P_{g-1} \rangle)=g-3$ hence $\dim (|K_X-(P_2+
\cdots +P_{g-1}|)=1$ ($|K_X - (P_2 + \cdots + P_{g-1})|$ is the
linear system induced by the pencil of hyperplanes in
$\mathbb{P}^{g-1}$ containing $\langle P_2, \cdots , P_{g-1}
\rangle$, it is denoted by $g^1_g$). Since $K_X$ has even degree on
each component of $X(\mathbb{R})$ it follows $g^1_g$ has odd degree
on $C_i$ for $2\leq i\leq g$ and even degree on $C_1$. From (2) in
Proposition \ref{Proposition1} it follows each divisor $D\in g^1_g$
contains some point of $C_1$, hence it contains a divisor of degree
2 with support on $C_1$. This proves each divisor of $g^1_g$ is of
the type $D=P'_1+P''_1+P'_2+ \cdots +P'_{g-1}$ with $P'_i\in C_i$
for $1\leq i\leq g-1$ and $P''_1\in C_1$.

Assume $P'_i$ is a base point of $g^1_g$ for some $2\leq i\leq g-1$
then no divisor of $g^1_g$ can contain another point of $C_i$. This
is impossible hence $P'_i$ is not a base point for $2\leq i\leq
g-1$. Assume e.g. $P''_1$ is a base point for $g^1_g$ then $\dim
|P'_1 + P'_2 + \cdots +P'_{g-1}|=1$. Then the geometric version of
the Riemann-Roch Theorem (see e.g. \cite{ref15}*{p. 248}) implies
$\dim \langle P'_1, \cdots, P'_{g-1} \rangle=g-3$ contradicting (1)
in Proposition \ref{Proposition1}. So $g^1_g$ is base point free and
it defines a covering $f:X\rightarrow \mathbb{P}^1$ having odd
degree on $C_i$ for $2\leq i\leq g-1$ and such that $C_1$ also
dominates $\mathbb{P}^1(\mathbb{R})$. From the description of the
divisors of $g^1_g$ it follows all fibers of $f$ over
$\mathbb{P}^1(\mathbb{R})$ are totally real, hence $X$ is a
separating curve.

Conversely, if $f:X\rightarrow \mathbb{P}^1$ is a morphism of degree
$g$ having odd parity on $C_i$ for $2\leq i\leq g-1$, then for a
real fiber $E$ of $f$ one has $|K_X -E|\neq \emptyset$ and $|K_X -
E|$ has odd parity on $C_2, \cdots, C_{g-1}$. Since $\deg
(K_X-E)=g-2$ each divisor of $|K_X-E|$ is of type $P_2+ \cdots +
P_{g-1}$ with $P_i\in C_i$ for $2\leq i\leq g-1$. So $f$ corresponds
to $|K_X -(P_2+ \cdots + P_{g-1})|$ and we already proved
$f(C_1)=\mathbb{P}^1(\mathbb{R})$. This shows $X$ is of special
type.

\end{proof}

For a curve $X$ satisfying properties (1) and (2) of Proposition
\ref{Proposition1} we found $| K_X-(P_2+ \cdots +P_{g-1})|$ with
$P_i\in C_i$ ($2\leq i\leq g-1$) defines a covering $\pi :
X\rightarrow \mathbb{P}^1$ such that $C_i$ dominates
$\mathbb{P}^1(\mathbb{R})$ for $1\leq i\leq g-1$. In particular
$\pi$ is not ramified at some real point of $X$. Since $\deg (\pi |
_{C_1})=2$, it also implies condition (2) of Proposition
\ref{Proposition1} is equivalent to: for all $P_i\in C_i$ ($2\leq
i\leq g-1$) and for all real hyperplanes $H\subset \mathbb{P}^{g-1}$
containing $\langle P_2, \cdots, P_{g-1} \rangle$ one has $H$
intersects $C_1$ transversally at 2 points. In the proof we are
going to use this (at first sight stronger) statement.

\begin{proof}[Proof of Proposition \ref{Proposition1}]
We are going to prove for all $g\geq 3$ the existence of a
canonically embedded smooth real curve $X \subset \mathbb{P}^{g-1}$
of genus $g$ such that $X(\mathbb{R})$ has $g-1$ connected
components $C_1, \cdots, C_{g-1}$ and satisfying the following two
properties

\begin{enumerate}
\item[(P1)] For all $P_i \in C_i$ ($1 \leq i \leq g-1$) one has $\dim
\left( \langle P_1, \cdots , P_{g-1} \rangle \right) =g-2$.
\item[(P2)] For all $P_i \in C_i$ ($2 \leq i \leq g-1$) each hyperplane
$H \subset \mathbb{P}^{g-1}$ containing $\langle P_2, \cdots ,
P_{g-1} \rangle $ intersects $C_1$ transversally at two points.
\end{enumerate}

In the first part of the proof, we prove the existence of $X$ for
the (already known) case $g=3$. The arguments used to prove this
case will be generalized in the second part of the proof in order to
obtain a proof by induction on $g$. In both parts of the proof we
are going to use the following fact. Let $\Gamma _0$ be a
canonically embedded non-hyperelliptic real singular curve having an
isolated real node $S$ as its only singularity and such that $\Gamma
_0(\mathbb{R})\setminus \{ S \} $ has $n$ connected components.
There exists a real algebraic deformation $\pi :
\mathfrak{X}\rightarrow I$ with $I$ a small neighborhood of $0$ in
$[0, +\infty[ \subset \mathbb{R}$ such that $\pi ^{-1}(0)=\Gamma _0$
and for $t>0$ the curve $X_t=\pi^{-1}(t)$ is a smooth real complete
curve of genus $g$ such that $X_t(\mathbb{R})$ has $n+1$ connected
components (see e.g. \cite{ref4}*{Section 7}, it can be shown
directly by using part of Construction II in \cite{ref7}). We can
assume for all $t\in I$ the curve $X_t$ is not hyperelliptic. Using
the relative dualizing sheaf for this deformation we can assume it
is a family of canonically embedded real curves in
$\mathbb{P}^{g-1}$.

\begin{proof}[First part of the proof] Let $X_0$ be a real hyperelliptic curve of genus 2. It has a
unique real component $C_{0,1}$ and $C_{0,1}$ dominates
$\mathbb{P}^1(\mathbb{R})$ for the hyperelliptic covering (see
\cite{ref3}*{Section 6}). Take $Q+\overline{Q}$ general on $X_0$
(hence $Q \in X_0(\mathbb{C})\setminus X_0(\mathbb{R})$) and
consider the real linear system $| K_{X_0}+(Q+\overline{Q})|$ on
$X_0$. Since all real divisors in $g^1_2$ on $X_0$ consist of 2 real
points we have $Q+\overline{Q} \notin g^1_2$.

In both parts of the proof we use the following general fact
concerning smooth complex curves $M$ of genus $g\geq 2$. Let $P$ and
$Q$ be two different points on $M$ with $\dim |P+Q |=0$ (this is
always the case if $M$ is not hyperelliptic) and consider the linear
system $|K_M +P+Q|$. This is a base point free linear system on $M$
and it defines a morphism $\phi : M \rightarrow \mathbb{P}^g$ such
that the image $\Gamma \subset \mathbb{P}^g$ of $M$ is the nodal
curve of arithmetic genus $g+1$ obtained from $M$ by identifying $P$
and $Q$ to become an ordinary node $S=\phi (P)=\phi (Q)$ of $\Gamma$
and $\Gamma$ is embedded by the dualizing sheaf $\omega _{\Gamma}$
(this is well-known, an argument can be found in \cite{ref11}*{Lemma
5}).

Applying this argument using $|K_{X_0}+(Q+\overline{Q})|$ we obtain
a canonically embedded real singular curve $\Gamma _0\subset
\mathbb{P}^2$ of degree 4, birationally equivalent to $X_0$. The
singular point $S$ on $\Gamma _0$ is an isolated point on $\Gamma_0
(\mathbb{R})$ and projection with center $S$ on a real line
$\mathbb{P}^1 \subset \mathbb{P}^2$ induces a real covering $X_0
\rightarrow \mathbb{P}^1$ corresponding to the $g^1_2$ on $X_0$. The
real locus $X_0 (\mathbb{R})$ corresponds to the unique connected
component $C_{0,1}$ of $\Gamma _0(\mathbb{R})\setminus \{  S \} $.
Since $S \notin C_{0,1}$ one has\\

(P1') For all $P\in C_{0,1}$ one has $\dim \langle P, S \rangle
=1$.\\

Moreover, if $H\subset \mathbb{P}^2$ is a real line containing $S$
then $H$ induces a divisor on $X_0$ belonging to
$g^1_2+Q+\overline{Q}$. This divisor is real hence it contains two
different points of $X_0(\mathbb{R})$. On $\Gamma _0$ one has\\

(P2') Each real line $H\subset \mathbb{P}^2$ with $S\in H$
intersects $C_{0,1}$ transversally at 2 points.\\

We obtain a real family $\pi :\mathfrak{X}\rightarrow I \subset [0,
+\infty [ \subset \mathbb{R}$ of canonically embedded real curves of
genus 3 in $\mathbb{P}^2$ such that $\pi ^{-1}(0)=\Gamma _0$ and for
$t>0$ the curve $\pi ^{-1}(t)=X_t$ is smooth such that
$X_t(\mathbb{R})$ has 2 connected components. Let $C_{t,1}$ be the
connected component of $X_t(\mathbb{R})$ specializing to $C_{0,1}$
and let $C_{t,2}$ be the connected component of $X_t(\mathbb{R})$
specializing to $\{ S \}$. Let $\mathcal{C}_i$ be the union of those
components $C_{t,i}$ (including $S$ in case $i=2$). For the
classical topology on $\mathfrak{X}(\mathbb{C})$ those are closed
subsets. Consider the fibered product $\mathcal{C}_1 \times _I
\mathcal{C}_2$ and its subset $\mathcal{Z}$ defined by $(P_1,
P_2)\in \mathcal{Z}$ if and only if $\dim \langle P_1, P_2 \rangle
=0$ (i.e. $P_1=P_2$). This is a closed subset in $\mathcal{C}_1
\times _I \mathcal{C}_2$ and since the natural map $\mathcal{C}_1
\times _I \mathcal{C}_2 \rightarrow I$ is proper it follows the
image $Z$ of $\mathcal{Z}$ in $I$ is closed. Because of (P1') one
has $0 \notin Z$. Shrinking $I$ we can assume
$\mathcal{Z}=\emptyset$. Let $G_{\mathbb{R}}$ be the Grassmannian of
real lines in $\mathbb{P}^2$ and define $\mathcal{I}\subset
\mathcal{C}_2 \times G_{\mathbb{R}}$ by $(P,L)\in \mathcal{I}$ if
and only if $P\in L$. Let $\mathcal{Z}'\subset \mathcal{I}$ be
defined by $(P,L)\in \mathcal{Z}'$ if and only if $L$ does not
intersect $C_{\pi (P),1}$ transversally. Since $\mathcal{Z}'\subset
\mathcal{C}_2 \times G_{\mathbb{R}}$ is closed and the induced map
$\mathcal{C}_2 \times G_{\mathbb{R}} \rightarrow I$ is proper it
follows the image $Z'$ of $\mathcal{Z}'$ in $I$ is closed. Because
of (P2') one has $0 \notin Z'$. Shrinking $I$ we can assume
$\mathcal{Z}'=\emptyset$.

Take $t_0 \neq 0$ and let $X=X_{t_0}\subset \mathbb{P}^2$. It is a
canonically embedded real curve of genus 3 and $X(\mathbb{R})$ has
two connected components $C_i=C_{t_0,i}$ ($i=1,2$). Let $P_i\in C_i$
for $i=1,2$ then $(P_1,P_2)\notin \mathcal{Z}=\emptyset$, hence
$\dim \langle P_1, P_2 \rangle =1$. This implies (P1) for this curve
$X$. Let $P_2\in C_2$ and let $L$ be a real line in $\mathbb{P}^2$
with $P_2\in L$. Then $(P_2,L)\in \mathcal{I}$. Choose a family
$(P_{t,2}, L_t)_{t\geq 0}$ in $\mathcal{I}$ with
$(P_{t_0},L_{t_0})=(P_2,L)$. Then $P_{0,2}=S$ hence $L_0$ intersects
$C_{0,1}$ transversally at 2 points. Since $\mathcal{Z}'=\emptyset$
it follows all intersection of $L_t$ and $C_{1,t}$ ($t\geq 0$) is
transversal. Since $\bigcup _{t\geq 0}\{t \}\times L_t$ and
$\mathcal{C}_1$ are closed in the classical topology of $I\times
\mathbb{P}^2$ it follows $L$ intersects $C_1$ transversally at 2
points. This implies (P2) for this curve $X$.
\renewcommand{\qedsymbol}{}
\end{proof}

\begin{proof}[Second part of the proof] Repeating the arguments of the first part of the proof we are going to finish the proof by
induction on the genus. Assume $X_0\subset \mathbb{P}^{g-1}$ is a
canonically embedded smooth real curve of some genus $g\geq 3$
satisfying properties (P1) and (P1). Take $Q+\overline{Q}$ general
on $X_0$ (by assumption already $X_0$ is not hyperelliptic hence
$\dim |Q+\overline{Q}|=0$). Using $|K_{X_0}+(Q+\overline{Q})|$,
which is a real linear system on $X_0$, we obtain the canonically
embedded real singular curve $\Gamma _0 \subset \mathbb{P}^g$ having
a unique singular point $S$. This singular point is an isolated
point on $\Gamma _0(\mathbb{R})$. Choosing a real hyperplane
$\mathbb{P}^{g-1} \subset \mathbb{P}^g$ not containing $S$ then
projection with center $S$ on $\mathbb{P}^{g-1}$ induces a canonical
embedding $X_0\subset \mathbb{P}^{g-1}$ defined over $\mathbb{R}$.
Let $C_{0,i}$ ($1\leq i\leq g-1$) be the connected component of
$\Gamma _0(\mathbb{R})\setminus \{ S\}$ corresponding to the
component $C_i$ of $X_0(\mathbb{R})$. As before assumptions (P1) and
(P2) imply\\

(P1') For each $P_{0,i}\in C_{0,i}$ ($1\leq i\leq g-1$) one has
$\dim \left( \langle P_{0,1}, \cdots, P_{0,g-1} , S \rangle \right)
=g-1$.

(P2') For each $P_{0,i}\in C_{0,i}$ ($2\leq i\leq g-1$) each real
hyperplane $H$ in $\mathbb{P}^g$ containing $\langle P_{0,2},
\cdots, P_{0,g-1}, S \rangle$ intersects $C_{0,1}$ transversally at
two points.\\

Consider a real deformation $\pi : \mathfrak{X} \subset I\times
\mathbb{P}^g \rightarrow I \subset [0, +\infty [ \subset \mathbb{R}$
of canonically embedded real curves of genus $g+1$ with $\pi
^{-1}(0)=\Gamma _0\subset \mathbb{P}^g$ and for $t\neq 0$ one has
$X_t=\pi ^{-1}(t)$ is a smooth real curve of genus $g+1$ such that
$X_t(\mathbb{R})$ has $g$ connected components. For $1\leq i\leq
g-1$ and $t\neq 0$ let $C_{t,i}$ be the component specializing to
$C_{0,i}$ and let $C_{t,g}$ be the component specializing to $\{
S\}$. For $1\leq i\leq g$ let $\mathcal{C}_i$ be the union of those
components $C_{t,i}$ (including $S$ in case $i=g$). Let $\prod
_{i=1,I}^g \mathcal{C}_i$ be the set of $g$-uples $(P_1, \cdots,
P_g)$ with $P_i\in \mathcal{C}_i$ and $\pi (P_i)=\pi (P_j)$ for
$i\neq j$ and let $\mathcal{Z} \subset \prod _{i=1,I}^g
\mathcal{C}_i$ be defined by $(P_1, \cdots, P_g) \in \mathcal{Z}$ if
and only if $\dim \left( \langle P_1, \cdots, P_g \rangle \right) <
g-1$. Let $\mathcal{I} \subset \prod _{i=2,I}^g \mathcal{C}_i \times
G_{\mathbb{R}}$ (now $G_{\mathbb{R}}$ is the Grassmannian of real
linear subspaces of dimension $g-2$ in $\mathbb{P}^g$) be defined by
$(P_2, \cdots, P_g, H)\in \mathcal{I}$ if and only if $P_i\in
\mathcal{C}_i$, $\pi (P_i)=\pi (P_j)$ for $i\neq j$ and $P_i\in H$
and let $\mathcal{Z}' \subset \mathcal{I}$ be defined by $(P_2,
\cdots, P_g,H)\in \mathcal{Z}'$ if and only if $H$ does not
intersect $C_{t,1}$ transversally ($t=\pi (P_i)$). From (P1') and
(P2') it follows, by shrinking $I$, we can assume $\mathcal{Z}$ and
$\mathcal{Z}'$ being empty. Then taking $t_0 \neq 0$ and
$X=X_{t_0}\subset \mathbb{P}^g$ we obtain a canonically embedded
smooth real curve $X$ of genus $g$ such that $X(\mathbb{R})$ has $g$
connected components $C_i=C_{t_0,i}$. As in the previous case the
arguments imply this curve $X$ satisfies (P1) and (P2).
\renewcommand{\qedsymbol}{}
\end{proof}
\end{proof}

Condition 1 in Proposition \ref{Proposition1} implies for $P_i\in
C_i$ ($1\leq i\leq g-1$) one has $\dim |P_1+ \cdots +P_{g-1}|=0$.
This implies $\sepgon (X)\neq g-1$, hence we proved the existence of
separating ($M-1$)-curves of special type of separating gonality
$g$. As mentioned in the introduction we are going to prove that in
case $t\in T_{g,s}$ corresponds to a curve $X_t$ with separating
gonality $g-1$ then $t$ is not an inner point of $T_{g,s}$. This
indicates that it is natural to include the use the separating
gonality in the deformation argument used in the proof of
Proposition \ref{Proposition1} (i.e. to use condition 1 to prove
Theorem \ref{SpecialType}).

\section{The relation between special type and the separating
gonality}\label{section4}

We start by proving the following remarkable fact concerning
separating morphisms of degree $g-1$ on separating ($M-2$)-curves of
special type.

\begin{proposition}\label{Lemma1}
Let $X$ be a real separating (M-2)-curve of special type of genus $g
\geq 3$ satisfying $\sepgon (X)=g-1$, then each $g^1_{g-1}$ on $X$
having odd degree on each component of $X(\mathbb{R})$ is
half-canonical. In particular $X$ has only finitely many linear
systems $g^1_{g-1}$ associated to separated morphisms of degree
$g-1$.
\end{proposition}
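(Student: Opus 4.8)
The plan is to prove the single structural statement that if $L$ is the invertible sheaf of degree $g-1$ defining one of these linear systems, then $L^{\otimes 2}\cong K_X$; the finiteness assertion is then immediate, since a half-canonical sheaf is a theta characteristic, there are only $2^{2g}$ of these over $\mathbb{C}$, hence only finitely many real ones carrying an effective $g^1_{g-1}$ of the required topological type. So everything is concentrated in the half-canonical claim.

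I would first fix the numerology and reduce to an involution-fixed-point statement. As $f$ is separating of degree $g-1$ on an $(M-2)$-curve it has topological degree $(1,\dots,1)$, so a general fibre is $P_1+\cdots+P_{g-1}$ with exactly one point on each component $C_i$. Set $M:=K_X-L$. Since $K_X$ has even degree on every component (as used already in the proof of Theorem \ref{SpecialType}) while $L$ has odd degree on every component, $M$ again has odd degree on every component, and Riemann--Roch gives $\deg M=g-1$, $\dim|M|=\dim|L|=1$. One then checks, using the special type hypothesis, that $M$ is base point free, so that $M$ is a \emph{second} separating $g^1_{g-1}$ of odd degree on each component; the whole problem reduces to proving $L\cong M$, i.e. that the residuation involution $L\mapsto K_X-L$ on $W^1_{g-1}$ fixes $L$, equivalently that $2D\sim K_X$ for $D\in|L|$.

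The special type hypothesis is brought in through the degree-$g$ pencils residual to the non-special components. Let $C=C_1$ be the special component. For $F=Q_2+\cdots+Q_{g-1}$ with $Q_i\in C_i$ the system $|K_X-F|$ is a $g^1_g$ of even parity on $C_1$ and odd parity on the others; by Lemma \ref{coverings} and the Remark following it, whenever $|K_X-F|$ is base point free the associated morphism is either separating of topological degree $(2,1,\dots,1)$, giving an \emph{unramified} double cover $C_1\to\mathbb{P}^1(\mathbb{R})$, or of type $(1,\dots,1,0)$ with $f(C_1)\ne\mathbb{P}^1(\mathbb{R})$. Special type is exactly the statement that the second alternative never occurs, so every base point free $|K_X-F|$ restricts to the \emph{connected} unramified double cover on $C_1$. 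On the other hand, taking $F=D-P_1$ to be the non-special part of a divisor $D=P_1+\cdots+P_{g-1}\in|L|$ gives $K_X-F\cong M+P_1$, so $|K_X-F|$ acquires the base point $P_1\in C_1$ with moving part exactly the pencil $|M|$; the symmetric computation starting from $|M|$ produces the base point loci attached to $L$. These base point loci are the walls in the space of $F$'s across which the degree drops from $g$ to $g-1$.

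The geometric heart is to analyse the degeneration of the unramified double cover on $C_1$ as $F$ approaches such a wall. Approaching the $L$-wall the two $C_1$-points of a fibre tend to $\{P_1,R_1\}$, with $P_1$ the emerging base point and $R_1$ the $C_1$-point of a member of $|M|$; since special type forbids any ramification on $C_1$ for base point free $F$, ramification is forced to appear exactly on the wall, at the fibre where $R_1=P_1$, which is then $2P_1+\cdots$ and reads off as a hyperplane tangent to the canonical curve at $P_1$. The crux of the proof, and the step I expect to be the main obstacle, is to upgrade this tangency at the single special point $P_1$ to a hyperplane tangent simultaneously at $P_1,\dots,P_{g-1}$, equivalently to $2D\sim K_X$. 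I would obtain this by letting $D$ sweep out the whole pencil $|L|$ and exploiting that the connected double cover of $C_1\cong S^1$ is topologically unique: its monodromy is a fixed nontrivial class in $H^1(C_1,\mathbb{Z}/2)$, independent of $F$, and this rigidity should pin down the $2$-torsion class governing all the walls, forcing the residual pencils of $L$ and $M$ to coincide. Once $2D\sim K_X$ is in hand, $L$ is half-canonical, and the finiteness of such $g^1_{g-1}$ follows from the finiteness of real theta characteristics carrying an effective $g^1_{g-1}$.
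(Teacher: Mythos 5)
Your reduction of the statement to $L\cong K_X-L$ is the right one, and your use of the special type hypothesis through the residual degree-$g$ pencils $|K_X-F|$, $F=Q_2+\cdots+Q_{g-1}$, together with Lemma \ref{coverings}, matches the paper's setup. But the step you yourself flag as ``the main obstacle'' is exactly where the proof lives, and the mechanism you propose for it cannot work. Since $C_1$ is a circle, $H^1(C_1,\mathbb{Z}/2)\cong\mathbb{Z}/2$, so \emph{every} connected unramified double cover $C_1\to\mathbb{P}^1(\mathbb{R})$ has the same (unique nontrivial) monodromy class; the assertion that this class is ``fixed, independent of $F$'' is vacuous and carries no information about which $2$-torsion point of $J(X)$ the class of $L-(K_X-L)$ is. No amount of sweeping $D$ through $|L|$ extracts the identity $2D\sim K_X$ from this topological invariant. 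There is also a smaller confusion in your wall picture: on the wall $F=D-P_1$ the moving part of $|K_X-F|$ is the separating pencil $|M|$, which is unramified over $\mathbb{P}^1(\mathbb{R})$ and meets $C_1$ in one point per fibre, so no ramified real fibre $2P_1+\cdots$ of an honest degree-$g$ morphism ``appears on the wall''; the divisor $2P_1+Q_2+\cdots+Q_{g-1}$ there is a base point plus an unramified fibre.

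The paper's argument hinges on a divisor you never construct. Assuming $|L|\neq|K_X-L|$, it takes $P_1+P_2+\cdots+P_{g-1}\in|L|$ and $P_1+Q_2+\cdots+Q_{g-1}\in|K_X-L|$ through a common $P_1\in C_1$ with, say, $P_{g-1}\neq Q_{g-1}$, and considers the \emph{mixed} residual system $|K_X-(Q_2+\cdots+Q_{g-2}+P_{g-1})|$, which contains $2P_1+P_2+\cdots+P_{g-2}+Q_{g-1}$. After checking $\dim|P_1+\cdots+P_{g-2}+Q_{g-1}|=0$ (via the nonexistence of a $g^1_{g-2}$ of odd parity on $g-2$ of the $g-1$ components), this is a $g^1_g$ of odd parity on $C_2,\ldots,C_{g-1}$ whose associated morphism is ramified at a point of $C_1$ in a real fibre; by Lemma \ref{coverings} that forces $f(C_1)\neq\mathbb{P}^1(\mathbb{R})$, contradicting special type. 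One must still dispose of possible base points, which the paper does by deforming $(Q_2,\ldots,Q_{g-2},P_{g-1})$ and bounding the base-point locus via Martens' theorem. This mixed divisor and the base-point analysis are the missing content. (A minor further point: for the finiteness you should also record $\dim|P_1+\cdots+P_{g-1}|<2$, so the $g^1_{g-1}$ is complete; otherwise a single theta characteristic could a priori contain infinitely many pencils.)
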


\begin{proof}
We assume $X$ is canonically embedded in $\mathbb{P}^{g-1}$ (as a
matter of fact $X$ cannot be hyperelliptic (see \cite{ref3}*{Section
6}) and for an effective divisor $E$ on $X$ we write $\langle E
\rangle$ to denote its linear span in $\mathbb{P}^{g-1}$. Let $C_1,
\cdots, C_{g-1}$ be the connected components of $X(\mathbb{R})$ and
assume for each covering $f:X\rightarrow \mathbb{P}^1$ of degree $g$
having degree 1 on $C_i$ for $2\leq i\leq g-1$ one has
$f(C_1)=\mathbb{P}^1(\mathbb{R})$. Let $h:X\rightarrow \mathbb{P}^1$
be a separating morphism of degree $g-1$ and let $E$ be a real fiber
of $h$ (hence $E=Q_1 + \cdots Q_{g-1}$ for $Q_i\in C_i$ for $1\leq
i\leq g-1$). Because of the Riemann-Roch Theorem $\dim |K_X -E| \geq
1$ and $|K_X - E|$ has odd parity on each $C_i$ ($1\leq i\leq g-1$).
Since $\deg (K_X-E)=g-1$ each real divisor of $|K_X - E|$ is again
of type $Q_1 + \cdots Q_{g-1}$ with $Q_i\in C_i$ for $1\leq i\leq
g-1$. Choose $P_1\in C_1$ and let $P_1+ P_2+ \cdots + P_{g-1}$ be a
real fiber of $h$ and $P_1+Q_2+ \cdots +Q_{g-1}\in |K_X-E|$ (here
$P_i, Q_i\in C_i$ for $2\leq i\leq g-1$). In case $P_1+ P_2+ \cdots
+P_{g-1}\neq P_1+Q_2+ \cdots + Q_{g-1}$ we can assume without loss
of generality that $P_{g-1}\neq Q_{g-1}$. Assume $X$ is canonically
embedded and assume $Q_{g-1}\in \langle P_1+ \cdots P_{g-2}\rangle$.
Since $P_{g-1}\in \langle P_1+ \cdots P_{g-2}\rangle$ it follows
$\dim (\langle P_1+ \cdots P_{g-1}+Q_{g-1} \rangle) =g-3$, and
therefore $\dim (|P_1 + Q_2+ \cdots +Q_{g-2}|)=1$. Hence there would
exist a $g^1_{g-2}$ on $X$ having odd degree on $C_1, \cdots,
C_{g-2}$. Since $|P_1+Q_2+ \cdots + Q_{g-2}|$ has odd parity on each
$C_i$ for $1\leq i\leq g-2$ and because of the existence of one more
component $C_{g-1}$ this is impossible. This proves $Q_{g-1} \notin
\langle P_1+ \cdots +P_{g-2} \rangle$ and therefore $\dim |P_1+
\cdots +P_{g-2}+Q_{g-1}| =0$. Since $2P_1+P_2+ \cdots
+P_{g-2}+Q_{g-1}\in |K_X-(Q_2+\cdots +Q_{g-2}+P_{g-1})|$ one obtains
$\dim |2P_1+P_2+ \cdots + P_{g-2}+Q_{g-1}| =1$ and $P_1$ is not a
base point of $|2P_1+P_2+ \cdots +P_{g-2}+Q_{g-1}|$. A morphism $f:X
\rightarrow \mathbb{P}^1$ associated to the base point free linear
system defined by $|2P_1 + P_2 +  \cdots + P_{g-2} + Q_{g-1}|$ is
ramified at $P_1 \in C_1$ and there is no other point of $C_1$ at
that fiber. This implies the existence of a fiber containing no
point of $C_1$, hence $f(C_1) \neq \mathbb{P}^1(\mathbb{R})$ and
therefore the existence of a divisor $D\in |2P_1+P_2+\cdots
+P_{g-2}+Q_{g-1}|$ with $\Supp (D)\cap C_1=\emptyset$. In case the
linear system $|2P_1 + P_2 + \cdots +P_{g-2} + Q_{g-1}|$ has no base
point the morphism $f$ has degree $g$ and it has odd degree on $C_2,
\cdots, C_{g-1}$ and therefore $f(C_1) \neq \mathbb{P}^1
(\mathbb{R})$ contradicting our assumptions. We are going to show
that by deforming $(Q_2, \cdots, Q_{g-2}, P_{g-1})$ on $C_2 \times
\cdots \times C_{g-2} \times C_{g-1}$ we obtain such contradiction.

Consider the closed subset $Z \subset X^{(g)}(\mathbb{R}) \times C_2
\times \cdots \times C_{g-1}$ defined by $(D', Q'_2, \cdots ,
Q'_{g-2}, P'_{g-1})\in Z$ if and only if $D'\in |K_X-(Q'_2 + \cdots
+ Q'_{g-2} + P'_{g-1})|$. Consider the morphisms $p_1 : Z
\rightarrow C_2 \times \cdots \times C_{g-1}$ and $p_2 : Z
\rightarrow X^{(g)}(\mathbb{R})$ induced by projection. Since $\dim
(|Q'_2 + \cdots + Q'_{g-2} + P'_{g-1}|)=0$ for all $(Q'_2, \cdots,
Q'_{g-2}, P'_{g-1})\in C_2 \times \cdots \times C_{g-1}$, it follows
from the Riemann-Roch Theorem that $p_1^{-1}(Q'_1, \cdots ,Q'_{g-1},
P'_{g-1})\cong \mathbb{P}^1(\mathbb{R})$, in particular $p_1$ is a
locally trivial $\mathbb{P}^1(\mathbb{R})$-bundle. Let $d_0=(Q_2,
\cdots, Q_{g-2}, P_{g-1})$, we proved there exists $(d_0,D)\in
p_1^{-1}(d_0)$ such that $D\notin X(\mathbb{R})^{(g)}$. Since
$X(\mathbb{R})^{(g)}$ is closed in $X^{(g)}(\mathbb{R})$ there
exists a classical neighborhood $V$ of $D$ in $X^{(g)}(\mathbb{R})$
such that $V\cap X(\mathbb{R})^{(g)}=\emptyset$. Let $S=\{ d\in C_2
\times \cdots \times C_{g-1} : p_2 (p_1^{-1}(d))\cap V=\emptyset \}$
and assume $d_0 \in \overline{S}$. Take a neighborhood $U$ of $d_0$
in $C_2 \times \cdots \times C_{g-1}$, such that $p_1^{-1}(U)$ is
homeomorphic to $\mathbb{P}^1(\mathbb{R})\times U$ and $p_1 |
_{p_1^{-1}(U)}$ is identified with the projection
$\mathbb{P}^1(\mathbb{R})\times U\rightarrow U$. The closure of
$p_1^{-1}(S\cap U)$ in $X^{(g)} \times U$ is identified with
$\mathbb{P}^1(\mathbb{R}) \times (\overline{S\cap U})$ (here
$\overline{S\cap U}$ is the closure of $S\cap U$ in $U$) hence
$p_1^{-1}(d_0)$ belongs to the closure of $p_1^{-1}(S \times U)$.
But $p_2^{-1}(V)$ is a neighborhood of $(D,d_0)$ in $Z$ hence
$p_2^{-1}(V)\cap p_1^{-1}(S\cap U) \neq \emptyset$. Of course this
contradicts the definition of $S$, hence $d_0\notin \overline{S}$.
Hence there exists a neighborhood $U$ of $d_0$ in $C_2 \times \cdots
\times C_{g-1}$ such that for all $d'=(Q'_2, \cdots,
Q'_{g-2},P'_{g-1})\in U$ one has $p_2(p_1^{-1}(d'))\cap V\neq
\emptyset$, hence there exists a divisor $D'\in V$ with $D' \in |K_X
-(Q'_2 + \cdots + Q'_{g-2}+P'_{g-1})|$. In particular $|K_X -(Q'_2+
\cdots +Q'_{g-2}+P'_{g-1}|$ contain a divisor $D'$ containing a
non-real point in its support. Since $|K_X -(Q'_2 + \cdots +
Q'_{g-2} + P'_{g-1})|$ has odd parity on $C_2, \cdots, C_{g-1}$ and
even parity on $C_1$ it follows $\Supp (D')\cap C_1 = \emptyset$. In
case $|K_X -(Q'_2 + \cdots +Q'_{g-2} + P'_{g-1})|$ would contain a
base point for all $d'\in U$, using termminology from \cite{ref10},
it would imply $\dim ((W^1_{g-1}+W^0_1)(\mathbb{R}))\geq g-2$. Since
$W^1_{g-1}=g-4$ ($X$ is not hyperelliptic, so we can apply Martens'
Theorem, see \cite{ref10}) this is impossible. So we can assume
$|K_X-(Q'_2 + \cdots + Q'_{g-2} + P'_{g-1})|$ is base point free.
But then it corresponds to a covering $f':X\rightarrow \mathbb{P}^1$
of degree $g$ having odd degree on $C_2, \cdots, C_{g-1}$ and
$f'(C_1)\neq \mathbb{P}^1(\mathbb{R})$. This contradicts the
assumptions on $X$. This proves $|K_X - (P_1 + \cdots +
P_{g-1})|=|P_1 + \cdots + P_{g-1}|$and so $P_1 + \cdots + P_{g-1}$
is a half-canonical divisor. From parity considerations we also
obtain $\dim | P_1 + \cdots + P_{g-1} | < 2$ for such divisor,
implying the finiteness of linear systems $g^1_{g-1}$ associated to
separating morphisms on a real separating ($M-2$)-curve of special
type.
\end{proof}

In \cite{ref1}*{Example 3} it is noted that each separating
($M-2$)-curve of genus 3 is of special type. It follows from the
previous Proposition this is not the case for genus $g\geq 4$.

\begin{corollary}\label{corollary2}
Let $g$ be an integer at least 4. There exist real separating
($M-2$)-curves of genus $g$ not of special type.
\end{corollary}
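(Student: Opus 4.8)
The plan is to use the contrapositive of Proposition \ref{Lemma1}. Every separating $(M-2)$-curve satisfies $\sepgon(X)\in\{g-1,g\}$, and $\sepgon(X)=g-1$ means exactly that $X$ carries a separating morphism of degree $g-1$, i.e. a base point free $g^1_{g-1}$ of topological type $(1,\dots,1)$. So it suffices to exhibit a separating $(M-2)$-curve $X$ of genus $g$ with $\sepgon(X)=g-1$ on which some separating $g^1_{g-1}$ is \emph{not} half-canonical; Proposition \ref{Lemma1} then forces $X$ not to be of special type. The guiding principle is that being half-canonical is very restrictive: a half-canonical $g^1_{g-1}$ is a theta characteristic $L$ with $2L\equiv K_X$ and $h^0(L)\geq 2$, and there are only finitely many such classes, sitting as isolated points of $\Pic^{g-1}(X)$. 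One expects a ''general'' curve with $\sepgon(X)=g-1$ to have separating systems that are not theta characteristics, and the task is to make this precise.

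I would first settle the anchor case $g=4$, where the dimension bookkeeping degenerates ($\dim W^1_{g-1}(\mathbb{C})=g-4=0$, so even non-special curves have isolated separating systems) and one must argue directly. A canonically embedded genus $4$ curve is the complete intersection of a quadric $Q$ and a cubic in $\mathbb{P}^3$, and its $g^1_3$'s are cut out by the rulings of $Q$. If $Q$ is smooth it has two distinct rulings giving two distinct classes $A,B$ with $A+B\equiv K_X$; then $2A\equiv K_X$ would force $A\equiv B$, so neither $A$ nor $B$ is half-canonical. By \cite{ref11} (together with the bound from \cite{ref6}) there is a separating $(M-2)$-curve $X$ of genus $4$ with $\sepgon(X)=3$, and such a curve can be taken on a smooth quadric, since lying on a cone is a proper closed condition (if necessary one deforms the quadric to a smooth one while keeping the three real ovals and a separating $g^1_3$). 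Its separating $g^1_3$ is then one of the two rulings, hence not half-canonical, so $X$ is not of special type by Proposition \ref{Lemma1}.

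For $g\geq 5$ I would argue by dimension. The separating morphisms of degree $g-1$ on a curve $X$ with $\sepgon(X)=g-1$ parameterize a dense open subset of some components of $W^1_{g-1}(\mathbb{R})$, while $W^1_{g-1}(\mathbb{C})$ has pure dimension $g-4\geq 1$. Since half-canonical classes are isolated points of $\Pic^{g-1}(X)$, any component of the family of separating $g^1_{g-1}$'s of positive real dimension must contain non-half-canonical members; by the contrapositive of Proposition \ref{Lemma1} the corresponding curve is not of special type. It therefore suffices to produce, inside the nonempty locus of curves with $\sepgon(X)=g-1$, one curve whose separating systems genuinely move in a positive-dimensional real family, equivalently one whose separating $g^1_{g-1}$ is not forced to be a theta characteristic.

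This last point is the main obstacle: distinguishing the generic $\sepgon=g-1$ behavior (separating systems filling a $(g-4)$-dimensional real family) from the special-type behavior (finitely many, half-canonical) established in Proposition \ref{Lemma1}. I would address it by a deformation in the spirit of Proposition \ref{Proposition1}: degenerate to a nodal canonical curve, smooth it while keeping the topological type $(1,\dots,1)$ and the condition $\sepgon=g-1$, and verify that the limiting separating system is not half-canonical, i.e. arrange the curve to lie off the proper closed locus in $T_g$ of curves carrying a special theta characteristic of degree $g-1$. Since this locus has positive codimension for $g\geq 4$, a general curve of the nonempty locus $T_{g,g-1}$ avoids it, and that general curve is not of special type.
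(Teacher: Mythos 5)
Your overall strategy is the right one and is the same as the paper's: exhibit a curve with $\sepgon=g-1$ whose separating $g^1_{g-1}$ is not half-canonical, and invoke the contrapositive of Proposition \ref{Lemma1}. But the step where you actually produce such a curve has a genuine gap, and it sits exactly where the real work is. For $g\geq 5$ you argue that since $W^1_{g-1}(\mathbb{C})$ has pure dimension $g-4\geq 1$, the separating systems ``must'' move in a positive-dimensional real family and hence hit non-half-canonical classes. This inference is false: positive complex dimension of $W^1_{g-1}$ does not force positive real dimension of the relevant components of $W^1_{g-1}(\mathbb{R})$ --- indeed Corollary \ref{corollary1} of this very paper exhibits curves where $W^1_{g-1}(\mathbb{R})$ has isolated points despite $\dim W^1_{g-1}(\mathbb{C})=g-4\geq 1$. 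You recognize this as ``the main obstacle,'' but your proposed repair --- a general curve of $T_{g,g-1}$ avoids the (codimension-one in $T_g$) locus of curves with a special theta characteristic --- is circular: it presupposes that $T_{g,g-1}$ is not entirely contained in that locus, i.e.\ essentially that $T_{g,g-1}$ has nonempty interior, which is not established at this point and is more or less equivalent to the statement being proved. If the given separating $g^1_{g-1}$ \emph{is} half-canonical, then $H^1(X,L^{\otimes 2})=H^1(X,K_X)\neq 0$, so the naive unobstructed-deformation argument for propagating it to nearby curves breaks down precisely in the case you need.

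The paper escapes this trap by doing the dimension count not on a fixed curve and not in $T_g$, but in the total space $\mathcal{H}$ of degree-$(g-1)$ morphisms over a suited family: by Horikawa's theory ($H^1(X,N_f)=0$ always) $\mathcal{H}$ is smooth of dimension $4g-4$ at \emph{every} point, including half-canonical ones, while the half-canonical locus $\mathcal{H}_h$ has dimension at most $3g-1<4g-4$ for $g\geq 4$. Hence any separating $[f]\in\mathcal{H}_h(\mathbb{R})$ has arbitrarily close neighbours $[f']\in\mathcal{H}(\mathbb{R})\setminus\mathcal{H}_h(\mathbb{R})$; these are still separating because the topological degree $(1,\dots,1)$ is locally constant on $\mathcal{H}(\mathbb{R})$, and Proposition \ref{Lemma1} then forces the underlying curve not to be of special type. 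This is the missing mechanism in your write-up. Your $g=4$ observation about the two rulings of a smooth quadric is a nice concrete criterion (neither ruling class $A$, $B$ with $A+B\equiv K_X$, $A\not\equiv B$ can be half-canonical), but it too hides the same difficulty in the clause ``deform the quadric to a smooth one while keeping a separating $g^1_3$'': if the starting curve sits on a cone, its separating $g^1_3$ is the vanishing theta-null and the deformation is exactly the obstructed case, so you would again need the argument in $\mathcal{H}$ to carry it out.
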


\begin{proof}
From \cite{ref7} we know there exists a dividing (M-2)-curve $X$
such that $\sepgon(X)=g-1$. Assume $X$ is of special type. Let $\pi
: \mathcal{X} \rightarrow S$ be a suited family for $X$ and $s_0\in
S(\mathbb{R})$ with $X=\pi ^{-1}(s_0)$. Let $\pi _{g-1} :
\mathcal{H} \rightarrow S$ be the parameterspace parameterizing
morphisms of degree $g-1$ from fibers of $\pi$ to $\mathbb{P}^1$.
From deformation theory of Horikawa it follows $\mathcal{H}$ is
smooth of dimension $4g-4$. Such morphism corresponds to a linear
system $g^1_{g-1}$, let $\mathcal{H}_h(\mathbb{C})$ be the subset of
$\mathcal{H}(\mathbb{C})$ corresponding to half canonical linear
systems $g^1_{g-1}$. This is a closed subset of
$\mathcal{H}(\mathbb{C})$ of dimension $3g-1$ and it is invariant
under complex conjugation, so $\mathcal{H}_h(\mathbb{C})$ are the
complex points of a closed subset $\mathcal{H}_h\subset \mathcal{H}$
defined over $\mathbb{R}$ and we find $\dim
(\mathcal{H}_h(\mathbb{R}))\leq 3g-1$, in particular for each $f\in
\mathcal{H}_h(\mathbb{R})$ one has $U\cap
\mathcal{H}(\mathbb{R})\neq \mathcal{H}_h(\mathbb{R})$. By
assumption there exists $[f]\in \pi ^{-1}_{g-1}(s_0)(\mathbb{R})$
such that $f : X \rightarrow \mathbb{P}^1$ is a separating morphism.
From Proposition \ref{Lemma1} it follows $[f]\in
\mathcal{H}_h(\mathbb{R})$. Hence $f$ deforms to a separating
morphism $[f']$ that is not half-canonical. By Proposition
\ref{Lemma1} this is defined on a fiber $X'$ of $\pi$ not of special
type.
\end{proof}

The previous proof also implies the following fact.

\begin{corollary}\label{relatie1}
$T_{g,s}\cap T_{g,g-1}\subset \overline{T_{g,ns}}$ in case $g\geq
4$.
\end{corollary}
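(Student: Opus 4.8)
The plan is to reuse the setup and dimension count from the proof of Corollary \ref{corollary2}, but now starting from an arbitrary point $t\in T_{g,s}\cap T_{g,g-1}$ rather than from a single explicit curve. Write $X=X_t$, which I may assume canonically embedded since it is not hyperelliptic. I choose a suited family $\pi:\mathcal{X}\rightarrow S$ for $X$ with $s_0\in S(\mathbb{R})$ and $\pi^{-1}(s_0)\cong X$, and let $\pi_{g-1}:\mathcal{H}\rightarrow S$ be the parameter space of degree $g-1$ morphisms from fibers of $\pi$ to $\mathbb{P}^1$. As in the proof of Corollary \ref{corollary2}, Horikawa's theory makes $\mathcal{H}$ smooth of dimension $4g-4$, and the half-canonical locus $\mathcal{H}_h$ is a closed real subset with $\dim(\mathcal{H}_h(\mathbb{R}))\leq 3g-1$.

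Because $t\in T_{g,g-1}$, the curve $X$ carries a separating morphism $f$ of degree $g-1$, giving a point $[f]\in\pi_{g-1}^{-1}(s_0)(\mathbb{R})$. Because $t\in T_{g,s}$, Proposition \ref{Lemma1} forces $f$ to be half-canonical, so $[f]\in\mathcal{H}_h(\mathbb{R})$. The first key point is the dimension inequality $3g-1 < 4g-4$, valid precisely for $g\geq 4$: it makes $\mathcal{H}_h(\mathbb{R})$ a closed subset of strictly smaller dimension than the smooth manifold $\mathcal{H}(\mathbb{R})$, so its complement is dense and meets every neighborhood of $[f]$.

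The second ingredient is openness of the separating locus. A separating morphism of degree $g-1$ on a separating $(M-2)$-curve necessarily has topological type $(1,\cdots,1)$, and topological type is locally constant on $\mathcal{H}(\mathbb{R})$; hence the separating morphisms form an open neighborhood of $[f]$. Intersecting with the dense complement of $\mathcal{H}_h(\mathbb{R})$, I obtain points $[f']$ arbitrarily close to $[f]$ representing separating morphisms $f'$ of degree $g-1$ that are not half-canonical, defined on fibers $X'=\pi^{-1}(\pi_{g-1}([f']))$.

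Finally I read off the conclusion. Each such $X'$ is a separating $(M-2)$-curve admitting a degree $g-1$ separating morphism, so $\sepgon(X')=g-1$; if $X'$ were of special type, Proposition \ref{Lemma1} would force $f'$ to be half-canonical, which it is not. Thus $X'$ is not of special type. As $[f']\to[f]$ I have $\pi_{g-1}([f'])\to s_0$, so under the local identification of $S(\mathbb{R})$ near $s_0$ with $T_g$ near $t$ the corresponding points $t'\in T_{g,ns}$ converge to $t$, giving $t\in\overline{T_{g,ns}}$. The main obstacle is confirming that the non-half-canonical deformations can be kept inside the separating locus; this is exactly where the openness of the topological-type stratum and the strict dimension gap (hence the restriction $g\geq 4$) are both indispensable.
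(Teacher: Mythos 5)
Your proof is correct and follows essentially the same route as the paper: the paper derives Corollary \ref{relatie1} by observing that the argument of Corollary \ref{corollary2} (Horikawa smoothness of $\mathcal{H}$, the dimension bound $\dim(\mathcal{H}_h(\mathbb{R}))\leq 3g-1<4g-4$, openness of the separating/topological-type condition, and Proposition \ref{Lemma1} applied twice) works verbatim starting from any $t\in T_{g,s}\cap T_{g,g-1}$. You have merely made explicit the density and openness steps that the paper leaves implicit.
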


We now prove the strong relation between both partitions of $T_g$.

\begin{theorem}\label{Proposition3}
Let $X$ be a real separating ($M-2$)-curve of genus $g$ not of
special type, then $\sepgon (X)=g-1$, hence $T_{g,ns} \subset
T_{g,g-1}$.
\end{theorem}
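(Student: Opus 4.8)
The plan is to prove the contrapositive-flavored statement: if $X$ is a separating ($M-2$)-curve with $\sepgon(X)=g$ (equivalently $\sepgon(X)\neq g-1$), then $X$ is of special type. Since we always have $\sepgon(X)\in\{g-1,g\}$ for a separating ($M-2$)-curve, proving that $\sepgon(X)>g-1$ forces special type is the same as proving that non-special type forces $\sepgon(X)=g-1$. I would therefore assume $X$ is \emph{not} of special type and produce a separating morphism of degree $g-1$, which shows $\sepgon(X)=g-1$. The inclusion $T_{g,ns}\subset T_{g,g-1}$ is then immediate by the definitions of these subsets of $T_g$.

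**Setting up the geometry.**
First I would put $X$ into its canonical embedding $X\subset\mathbb{P}^{g-1}$ (legitimate since a separating ($M-2$)-curve is non-hyperelliptic, as recorded in the proof of Proposition \ref{Lemma1} via \cite{ref3}). Let $C_1,\dots,C_{g-1}$ be the components of $X(\mathbb{R})$. Because $X$ is not of special type, for \emph{every} choice of distinguished component $C_i$ there exists a degree-$g$ morphism with odd parity on all other components and with $f(C_i)\neq\mathbb{P}^1(\mathbb{R})$; by Lemma \ref{coverings} and the subsequent Remark, such an $f$ has topological degree $(1,\dots,1,0)$ and its generic real fiber meets each $C_j$ ($j\neq i$) once. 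Translating through the geometric Riemann-Roch dictionary, such an $f$ is $|K_X-(P_2+\cdots+P_{g-1})|$ and the failure of $f(C_1)=\mathbb{P}^1(\mathbb{R})$ means that a general real hyperplane through $\langle P_2,\dots,P_{g-1}\rangle$ \emph{misses} $C_1$. This is exactly the negation of property (P2) in Proposition \ref{Proposition1}, so non-special type gives me geometric slack: there are hyperplane sections that avoid a chosen component.

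**Producing the degree $g-1$ separating morphism.**
The substantive step is to convert this slack into an effective divisor of type $Q_1+\cdots+Q_{g-1}$ with $Q_j\in C_j$ moving in a one-dimensional base-point-free linear system of odd parity on each component; such a system is a $g^1_{g-1}$ defining a separating morphism of degree $g-1$. Concretely, I would take a general point $P_1+\cdots+P_{g-1}$ (one point on each $C_j$) spanning a $\mathbb{P}^{g-2}=\langle P_1,\dots,P_{g-1}\rangle$, giving $\dim|P_1+\cdots+P_{g-1}|=0$ generically. I then deform: using non-special type, I can arrange that the residual linear system $|K_X-(P_2+\cdots+P_{g-1})|$ contains divisors whose support avoids $C_1$, and I exploit this to show that $P_1+\cdots+P_{g-1}$ can be chosen to \emph{move}, i.e.\ $\dim|P_1+\cdots+P_{g-1}|\geq 1$. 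The argument should mirror the deformation bookkeeping in Proposition \ref{Lemma1} (the fibered products $\mathcal{C}_1\times_I\cdots$, properness, the $\mathbb{P}^1$-bundle structure of $p_1$), but run in the opposite direction: rather than deriving half-canonicity, I use the existence of a non-real (hence $C_1$-avoiding) section to force a genuine pencil through totally real fibers. Once I have a base-point-free $g^1_{g-1}$ of odd parity on every component, the induced $f:X\to\mathbb{P}^1$ satisfies $f^{-1}(\mathbb{P}^1(\mathbb{R}))=X(\mathbb{R})$, so it is separating of degree $g-1$ and $\sepgon(X)=g-1$.

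**Main obstacle.**
The hard part will be the dimension-count that promotes $\dim|P_1+\cdots+P_{g-1}|$ from $0$ to $1$: I must rule out the possibility that the $C_1$-avoiding hyperplane sections all introduce base points or a spurious higher-dimensional system, which is precisely the kind of pitfall Proposition \ref{Lemma1} handles via Martens' Theorem and the bound $\dim W^1_{g-1}(\mathbb{C})=g-4$. I expect to invoke Martens' Theorem again to control $W^1_{g-1}$ and to use the geometric Riemann-Roch identity $\dim\langle D\rangle=\deg D-1-\dim|D|$ to pin down exactly when the span drops. Keeping careful track of parity on each component — odd on the $C_j$ we want to stay totally real and even on the one we are allowed to avoid — is what guarantees the final pencil is genuinely separating rather than merely a degree-$(g-1)$ map with complex fibers.
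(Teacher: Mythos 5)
Your setup is sound (canonical embedding, the reduction to producing a base-point-free totally real $g^1_{g-1}$ with odd parity on every component, and the correct unwinding of ``not of special type'' into the existence, for each component, of a degree-$g$ pencil of topological degree $(1,\dots,1,0)$ avoiding it), but the substantive step --- the one you yourself flag as ``the hard part'' --- is precisely the step you have not supplied, and the tools you propose for it point the wrong way. Martens' Theorem and the bound $\dim W^1_{g-1}(\mathbb{C})=g-4$ are \emph{upper} bounds: in Proposition \ref{Lemma1} they serve to rule out unwanted base points, and they cannot by themselves force a divisor $P_1+\cdots+P_{g-1}$ to move. By geometric Riemann--Roch, $\dim|P_1+\cdots+P_{g-1}|\geq 1$ exactly when the $g-1$ points fail to span a $\mathbb{P}^{g-2}$, which is a closed, positive-codimension condition; ``choose general points and deform'' gives no reason for it ever to be realized with one point on each component. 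You need a genuine existence mechanism, and ``run Proposition \ref{Lemma1} in the opposite direction'' is not one.

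The paper's mechanism is a connectedness argument with a discrete invariant, which your sketch does not contain. First reduce to the case where $X$ admits a separating morphism $f$ of degree $g$ (otherwise Gabard's bound $\sepgon(X)\leq g$ together with the trivial bound $\sepgon(X)\geq g-1$ gives the conclusion at once); such an $f$ has topological degree $(2,1,\dots,1)$, say doubled on $C_1$, and its residual $|K_X-g^1_g|$ is a single totally real divisor $Q_2+\cdots+Q_{g-1}$ with $Q_i\in C_i$. Non-special type supplies another tuple $Q'_2+\cdots+Q'_{g-1}$ whose residual $g^1_g$ is \emph{non}-separating, of topological degree $(1,\dots,1,0)$. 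Join the two tuples by a path in $C_2\times\cdots\times C_{g-1}$ and set $g^1_g(t)=|K_X-(Q_2(t)+\cdots+Q_{g-1}(t))|$. If every $g^1_g(t)$ were base point free one would obtain a continuous family of morphisms whose topological degree jumps from $(2,1,\dots,1)$ to $(1,\dots,1,0)$, contradicting the local constancy of this invariant. Hence some $g^1_g(t_0)$ acquires a base point; continuity and parity force all its divisors to have exactly two points on $C_1$ and one on each other component, the base point cannot lie on $C_i$ for $i\geq 2$, so it lies on $C_1$, and subtracting it yields the base-point-free totally real $g^1_{g-1}$ defining the separating morphism of degree $g-1$. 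Without this (or an equivalent) intermediate-value argument, your proof does not close.
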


\begin{proof}
Assume $X$ is a separating ($M-2$)-curve of genus $g$ not of special
type. We can assume that there exists a separating real morphism
$f:X\rightarrow \mathbb{P}^1$ of degree $g$, otherwise clearly
$\sepgon (X)=g-1$. For each component $C_i$ of $X(\mathbb{R})$ one
has $f|_{C_i} : C_i \rightarrow \mathbb{P}^1(\mathbb{R})$ is a
covering of some degree $d_i \geq 1$ and $\sum _{i=1}^{g-1}d_i = g$.
It follows $d_i=1$ except for one value $d_i=2$, we can assume
$d_1=2$ and $d_2= \cdots = d_{g-1}=1$. The morphism $f$ corresponds
to a linear systems $g=g^1_g$ and $|K_X -g^1_g|\neq \emptyset$.
Since $\deg (K_X-g^1_g)=g-2$ and $|K_X -g^1_g|$ has odd degree on
$C_2, \cdots, C_{g-1}$ it follows $|K_X -g^1_g|=\{ Q_2+ \cdots
+Q_{g-1} \}$ for some $Q_i\in C_i$. By assumption $X$ is not
special, hence there exists $Q'_i\in C_i$ for $2\leq i\leq g$ such
that $|K_X -(Q'_2+ \cdots + Q'_{g-1})|$ defines a $g^1_g=g'$ on $X$
such that $g'$ corresponds to a non-separating morphism $f':X
\rightarrow \mathbb{P}^1$, hence $C_i$ for $2\leq i\leq g-1$
dominates $\mathbb{P}^1(\mathbb{R})$ but $C_1$ doesn't. This implies
$g'$ contains a real divisor $P'+\overline{P'}+P'_2+ \cdots +
P'_{g-1}$ with $P'+\overline{P'}$ a non-real point of $X$. Take a
path $\gamma : [0, 1]\rightarrow C_2 \times \cdots \times C_{g-1}$
with $\gamma (0)=(Q_2, \cdots, Q_{g-1})$ and $\gamma (1)=(Q'_2,
\cdots, Q'_{g-1})$. Let $\gamma (t)=(Q_2(t), \cdots, Q_{g-1}(t)$ and
$g^1_g(t)=|K_X-(Q_2(t)+ \cdots + Q_{g-1}(t))|$. In case $g^1_g(t)$
is base point free for all $t\in I$ we can find a family of real
morphisms $f_t:X \rightarrow \mathbb{P}^1$ with $f_0=f$ and
$f_1=f'$. Since the topological degree of $f$ (resp. $f'$) is $(2,1,
\cdots, 1)$ (resp. $(1, \cdots, 1, 0)$) and this discrete invariant
should be constant in this family, we obtain a contradiction. So
there exists $t_0\in I$ such that $g^1_g(t_0)$ has a base point.
Moreover for $t<t_0$ we can assume $g^1_g(t)$ defines a separating
morphism $f_t:X \rightarrow \mathbb{P}^1$. By continuity it follows
each divisor on $g^1_g(t_0)$ is of type
$\overline{P_1}+\overline{P'_1}+\overline{P_2}+\cdots
+\overline{P_{g-1}}$ with $\overline{P_1}, \overline{P'_1}\in C_1$
and $\overline{P_i}\in C_i$ for $2\leq i\leq g-1$. Assume
$\overline{P_2}$ is a fixed point of $g^1_g(t_0)$ then for
$\overline{P'_2}\in C_2 \setminus \{ \overline{P_2} \}$ there is no
divisor in $g^1_g(t_0)$ containing $\overline{P'_2}$, a
contradiction. So we find $\overline{P_i}$ is not a fixed point for
$2\leq i\leq g-1$, hence we can assume $\overline{P'_1}$ is a fixed
point. But then we find $\dim |\overline{P_1}+\overline{P_2}+ \cdots
+ \overline{P'_{g-1}}| =1$, hence $g^1_g(t_0)-\overline{P'_1}$
defines a separating morphism $f_0: X\rightarrow \mathbb{P}^1$ of
degree $g-1$. This proves $\sepgon (X)=g-1$.
\end{proof}

\begin{corollary}\label{ExistenceSpecialSepgon}
Let $g\geq 3$. There exist separating ($M-2$)-curves $X$ of special
type such that $\sepgon (X)=g-1$.
\end{corollary}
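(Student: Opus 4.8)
The plan is to exhibit a single point of $T_{g,s} \cap T_{g,g-1}$: a curve $X_t$ with $t$ in this intersection is of special type and has $\sepgon(X_t) = g-1$, which is precisely the assertion. All the substantive input is already available, so the argument is essentially a connectedness argument combined with Theorem \ref{Proposition3} and the closedness statements of Section \ref{section2}.

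First I would treat $g = 3$ separately, since the main argument below needs $T_{g,ns} \neq \emptyset$ and that fails in genus $3$. Here every separating $(M-2)$-curve is of special type (see \cite{ref1}*{Example 3}), so $T_{3,s} = T_3$; on the other hand \cite{ref11} guarantees that the value $\sepgon = g-1$ is attained by some separating $(M-2)$-curve, i.e. $T_{3,g-1} \neq \emptyset$. Hence $T_{3,s} \cap T_{3,g-1} = T_{3,g-1} \neq \emptyset$, and any such curve is of special type with $\sepgon = 2 = g-1$.

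For $g \geq 4$ the argument is topological. By Theorem \ref{SpecialType} the closed set $T_{g,s}$ is non-empty, while by Corollary \ref{corollary2} the set $T_{g,ns} = T_g \setminus T_{g,s}$, which is open by Lemma \ref{SpecialTypeClosed}, is also non-empty. Since $T_g$ is connected, a non-empty proper open subset cannot be closed; therefore $\overline{T_{g,ns}}$ strictly contains $T_{g,ns}$, and the additional points lie in the complement $T_{g,s}$. This gives $T_{g,s} \cap \overline{T_{g,ns}} \neq \emptyset$.

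It then remains to place this intersection inside $T_{g,g-1}$. By Theorem \ref{Proposition3} we have $T_{g,ns} \subset T_{g,g-1}$, and by Lemma \ref{SepgonClosed} the set $T_{g,g-1}$ is closed, so passing to closures yields $\overline{T_{g,ns}} \subset T_{g,g-1}$. Consequently $\emptyset \neq T_{g,s} \cap \overline{T_{g,ns}} \subset T_{g,s} \cap T_{g,g-1}$, and any point of this last set produces the desired curve. The only delicate point is the connectedness step, which is what forces a boundary point of $T_{g,ns}$ to exist and to fall into $T_{g,s}$; I would be careful to confirm that both $T_{g,s}$ and $T_{g,ns}$ are genuinely non-empty (the former from Theorem \ref{SpecialType}, the latter from Corollary \ref{corollary2}, which is exactly why the case $g \geq 4$ is separated from $g = 3$) before invoking it.
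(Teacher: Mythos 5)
Your argument is correct and essentially identical to the paper's: $T_{g,ns}$ is non-empty, open and proper in the connected space $T_g$, hence not closed, while $T_{g,ns}\subset T_{g,g-1}$ (Theorem \ref{Proposition3}) with $T_{g,g-1}$ closed (Lemma \ref{SepgonClosed}), so $\emptyset\neq \overline{T_{g,ns}}\setminus T_{g,ns}\subset T_{g,s}\cap T_{g,g-1}$. Your separate treatment of $g=3$ is actually more careful than the paper, whose proof invokes Corollary \ref{corollary2} (stated only for $g\geq 4$) without comment even though $T_{3,ns}=\emptyset$, so that case indeed needs the direct appeal to the existence of a genus-$3$ curve with $\sepgon=2$.
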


\begin{proof}
From Lemma \ref{SpecialTypeClosed} it follows $T_{g,ns}$ is an open
subset of $T_g$ and it follows from Corollary \ref{corollary2} that
$T_{g,ns}\neq \emptyset$. It is already proved in Theorem
\ref{SpecialType} that $T_{g,ns}\neq T_g$ (indeed, $T_{g,ns}\neq
\emptyset$). Since $T_g$ is connected it follows $T_{g,ns}$ is not
closed. On the other hand we just proved $T_{g,ns}\subset T_{g,g-1}$
and it is proved in Lemma \ref{SepgonClosed} that $T_{g,g-1}$ is
closed. Hence $T_{g,ns}\neq T_{g,g-1}$ and therefore $T_{g,s}\cap
T_{g,g-1}\neq \emptyset$.
\end{proof}

The proof of this corollary implies the following inclusion.

\begin{corollary}\label{relatie2}
$\overline{T_{g,ns}}\cap T_{g,s}\subset T_{g,g-1}$
\end{corollary}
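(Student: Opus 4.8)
The plan is to establish the inclusion $\overline{T_{g,ns}}\cap T_{g,s}\subset T_{g,g-1}$ by combining the topological results already in hand. First I would recall the key fact from Theorem \ref{Proposition3}, namely $T_{g,ns}\subset T_{g,g-1}$, together with Lemma \ref{SepgonClosed}, which tells us that $T_{g,g-1}$ is closed in $T_g$. Since $T_{g,g-1}$ is closed and contains $T_{g,ns}$, it must contain the closure: $\overline{T_{g,ns}}\subset \overline{T_{g,g-1}}=T_{g,g-1}$. Intersecting both sides with $T_{g,s}$ immediately yields $\overline{T_{g,ns}}\cap T_{g,s}\subset T_{g,g-1}\cap T_{g,s}\subset T_{g,g-1}$, which is the desired statement.

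In fact the inclusion $\overline{T_{g,ns}}\subset T_{g,g-1}$ holds without intersecting with $T_{g,s}$, so the statement as phrased is a direct weakening of this cleaner fact. The reason the author states it as an intersection with $T_{g,s}$ is presumably to set up the identification of the interesting locus $T_{g,s}\cap\overline{T_{g,ns}}$ (the special-type curves with separating gonality $g-1$) announced in the introduction, which is exactly where the isolated $g^1_{g-1}$ phenomenon lives. The companion inclusion from Corollary \ref{relatie1}, namely $T_{g,s}\cap T_{g,g-1}\subset \overline{T_{g,ns}}$, combined with the inclusion proved here, will eventually pin down $T_{g,s}\cap T_{g,g-1}=T_{g,s}\cap\overline{T_{g,ns}}$ and lead to $\overline{T_{g,ns}}=T_{g,g-1}$.

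There is essentially no obstacle here: the entire content has already been done in the earlier results, and this corollary is the formal consequence of feeding the open/closed topology of the two partitions into the set-theoretic inclusion. The only point requiring any care is making sure one invokes the closedness of $T_{g,g-1}$ (Lemma \ref{SepgonClosed}) rather than merely the inclusion $T_{g,ns}\subset T_{g,g-1}$, since without closedness one could not pass from the set to its closure. The phrase in the preceding sentence of the excerpt, that the proof of Corollary \ref{ExistenceSpecialSepgon} ``implies the following inclusion,'' signals that the author intends this to be read off directly from the argument just given, where precisely these two ingredients $T_{g,ns}\subset T_{g,g-1}$ and the closedness of $T_{g,g-1}$ were deployed.
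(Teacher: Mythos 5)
Your argument is correct and is exactly the one the paper intends: the phrase ``the proof of this corollary implies the following inclusion'' refers to the fact that Corollary \ref{ExistenceSpecialSepgon} already combined $T_{g,ns}\subset T_{g,g-1}$ (Theorem \ref{Proposition3}) with the closedness of $T_{g,g-1}$ (Lemma \ref{SepgonClosed}), giving $\overline{T_{g,ns}}\subset T_{g,g-1}$, from which the stated inclusion follows by intersecting with $T_{g,s}$. Your observation that the intersection with $T_{g,s}$ is a formal weakening, kept only to pair with Corollary \ref{relatie1}, is also accurate.
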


Together with Corollary \ref{relatie1} This implies

\begin{corollary}\label{relatie3}
$\overline{T_{g,ns}}=T_{g,g-1}$.
\end{corollary}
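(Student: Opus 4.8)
The plan is to deduce this purely formally from the two inclusions already in hand, using that $T_g=T_{g,s}\cup T_{g,ns}$ is a disjoint partition into the special and non-special loci (so every point lies in exactly one of the two pieces) and that $T_{g,g-1}$ is closed by Lemma \ref{SepgonClosed}. No new geometry is required: the substance has been isolated in Corollaries \ref{relatie1} and \ref{relatie2} and in Theorem \ref{Proposition3}, and what remains is bookkeeping with the two partitions. I treat the substantive range $g\geq 4$ first and then dispose of $g=3$ separately.

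First I would prove $\overline{T_{g,ns}}\subseteq T_{g,g-1}$. Splitting along the partition, $\overline{T_{g,ns}}=(\overline{T_{g,ns}}\cap T_{g,s})\cup(\overline{T_{g,ns}}\cap T_{g,ns})$, and since $T_{g,ns}\subseteq\overline{T_{g,ns}}$ the second piece is just $T_{g,ns}$. Now $\overline{T_{g,ns}}\cap T_{g,s}\subseteq T_{g,g-1}$ by Corollary \ref{relatie2}, while $T_{g,ns}\subseteq T_{g,g-1}$ by Theorem \ref{Proposition3}; hence $\overline{T_{g,ns}}\subseteq T_{g,g-1}$. (Equivalently, one may simply observe that $T_{g,ns}\subseteq T_{g,g-1}$ together with $T_{g,g-1}$ being closed forces $\overline{T_{g,ns}}\subseteq\overline{T_{g,g-1}}=T_{g,g-1}$.)

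Next I would prove the reverse inclusion $T_{g,g-1}\subseteq\overline{T_{g,ns}}$. Decomposing along the same partition, $T_{g,g-1}=(T_{g,g-1}\cap T_{g,s})\cup(T_{g,g-1}\cap T_{g,ns})$. The first piece lies in $\overline{T_{g,ns}}$ by Corollary \ref{relatie1}, and the second is contained in $T_{g,ns}\subseteq\overline{T_{g,ns}}$ trivially. Combining the two inclusions yields the equality $\overline{T_{g,ns}}=T_{g,g-1}$ for $g\geq 4$.

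I do not expect a genuine obstacle here, since the only real constraint is the hypothesis $g\geq 4$ in Corollary \ref{relatie1}, and the case $g=3$ must be handled by hand: as noted in \cite{ref1}*{Example 3} every separating $(M-2)$-curve of genus $3$ is of special type, so $T_{3,ns}=\emptyset$ and hence $\overline{T_{3,ns}}=\emptyset$; moreover such a curve is a non-hyperelliptic plane quartic, so it has no degree-$2$ separating morphism and $T_{3,g-1}=\emptyset$ as well, whence the asserted equality holds trivially. Thus the main point is merely to verify that decomposing each side along the partition $T_g=T_{g,s}\cup T_{g,ns}$ reduces every inclusion to one of the previously proved statements.
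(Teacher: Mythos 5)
Your argument for $g\geq 4$ is correct and is exactly the paper's intended derivation: the inclusion $\overline{T_{g,ns}}\subseteq T_{g,g-1}$ follows from Theorem \ref{Proposition3} together with the closedness of $T_{g,g-1}$ (Lemma \ref{SepgonClosed}), and the reverse inclusion follows by splitting $T_{g,g-1}$ along the partition $T_g=T_{g,s}\cup T_{g,ns}$ and applying Corollary \ref{relatie1} to the special part and the trivial inclusion $T_{g,ns}\subseteq\overline{T_{g,ns}}$ to the rest. The paper gives nothing beyond ``together with Corollary \ref{relatie1} this implies,'' so on this range your bookkeeping is the proof.

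The genuine problem is your disposal of $g=3$. It is not true that every separating $(M-2)$-curve of genus $3$ is a non-hyperelliptic plane quartic: take $y^2=p(x)$ with $p(x)=\prod_{i=1}^{4}(x^2+a_i)$ for distinct $a_i>0$. Its smooth projective model has genus $3$; since $p>0$ on $\mathbb{R}$ the real locus consists of the two sections $y=\pm\sqrt{p(x)}$, each closed up at one of the two real points at infinity, so $s=2=g-1$; and the degree-$2$ map $(x,y)\mapsto x$ pulls $\mathbb{P}^1(\mathbb{R})$ back exactly to $X(\mathbb{R})$, so it is a separating morphism and the curve is a separating $(M-2)$-curve with $\sepgon(X)=2=g-1$. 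Hence $T_{3,2}\neq\emptyset$. On the other hand such a curve carries no base-point-free $g^1_3$ at all (by Riemann--Roch any complete $g^1_3$ on a hyperelliptic curve of genus $3$ is the $g^1_2$ plus a base point), so it is vacuously of special type and indeed $T_{3,ns}=\emptyset$. Consequently $\overline{T_{3,ns}}=\emptyset\subsetneq T_{3,2}$: the equality you are trying to rescue at $g=3$ is actually false there, and the statement must be read with the hypothesis $g\geq 4$ inherited from Corollary \ref{relatie1}. So the error is confined to your attempt to cover a case that the chain of cited results does not (and cannot) cover; the substantive part of your proof stands.
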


\begin{corollary}\label{general}
Let $g\geq 4$. There exist general separating ($M-2$)-curves of
genus $g$ of special type and general separating ($M-2$)-curves of
non-special type.
\end{corollary}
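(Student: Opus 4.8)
The plan is to show that for $g \geq 4$, both $T_{g,s}$ and $T_{g,ns}$ contain non-empty open subsets of $T_g$, which is exactly what ``being general'' requires. Since these two sets partition $T_g$, the key is to show each one has non-empty interior.

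First I would observe that $T_{g,ns}$ is already known to be open by Lemma \ref{SpecialTypeClosed}, and non-empty by Corollary \ref{corollary2}. Therefore $T_{g,ns}$ itself is a non-empty open subset of $T_g$, so the property ``not of special type'' holds for a general separating $(M-2)$-curve immediately. This half of the statement requires essentially no new work beyond citing the two earlier results.

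The other half is the genuinely substantive part: I must exhibit a non-empty open subset of $T_g$ all of whose curves are of special type. By Lemma \ref{SpecialTypeClosed} the set $T_{g,s}$ is closed, so its interior is what I need to control; I cannot simply invoke openness. The plan is to use the relation between the two partitions established in Corollary \ref{relatie3}, namely $\overline{T_{g,ns}}=T_{g,g-1}$. Then I would consider the set $T_{g,s}\setminus \overline{T_{g,ns}}$. Since $\overline{T_{g,ns}}=T_{g,g-1}$ is closed (Lemma \ref{SepgonClosed}), its complement $T_{g,g}=T_g\setminus T_{g,g-1}$ is open; and because $T_{g,g}\subset T_{g,s}$ (this follows since $T_{g,ns}\subset T_{g,g-1}$ by Theorem \ref{Proposition3}, so the complementary inclusion $T_{g,g}\subset T_{g,s}$ holds), every point of $T_{g,g}$ is of special type. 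Thus $T_{g,g}$ is an open set contained in $T_{g,s}$. It remains to check $T_{g,g}\neq\emptyset$: this is precisely the content of the remark in the introduction that $T_{g,g}=T_{g,s}\setminus(T_{g,s}\cap\overline{T_{g,ns}})$ is non-empty, which rests on Corollary \ref{ExistenceSpecialSepgon} together with the fact proved in the proof of Corollary \ref{ExistenceSpecialSepgon} that $T_{g,ns}\neq T_{g,g-1}$, forcing $T_{g,g}$ to be non-empty.

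The main obstacle I anticipate is purely bookkeeping rather than conceptual: correctly threading together the inclusions $T_{g,ns}\subset T_{g,g-1}$ (Theorem \ref{Proposition3}), $T_{g,s}\cap T_{g,g-1}\subset\overline{T_{g,ns}}$ (Corollary \ref{relatie1}), and $\overline{T_{g,ns}}=T_{g,g-1}$ (Corollary \ref{relatie3}) to conclude that $T_{g,g}$ is both open and a non-empty subset of $T_{g,s}$. Once the identification $T_{g,g}=T_{g,s}\cap T_{g,g}$ is in hand together with its non-emptiness, the conclusion that ``of special type'' holds for a general curve follows immediately, since an open set is the witness $U$ in the definition of ``general.''
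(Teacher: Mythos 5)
Your overall architecture matches the paper's: the non-special half is immediate from Lemma \ref{SpecialTypeClosed} (openness of $T_{g,ns}$) together with Corollary \ref{corollary2} (non-emptiness), and for the special half both you and the paper exhibit the open set $T_g\setminus\overline{T_{g,ns}}=T_{g,g}$, observe it is contained in $T_{g,s}$, and then need $T_{g,g}\neq\emptyset$. The gap is in your justification of that final non-emptiness. The facts you invoke --- Corollary \ref{ExistenceSpecialSepgon} and the statement $T_{g,ns}\neq T_{g,g-1}$ from its proof --- establish only that $T_{g,g-1}$ contains a point of $T_{g,s}$, i.e.\ that $T_{g,s}\cap T_{g,g-1}\neq\emptyset$. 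That is logically independent of $T_{g,g}\neq\emptyset$: a priori one could have $T_{g,g-1}=T_g$ (every separating $(M-2)$-curve of separating gonality $g-1$) and still have $T_{g,ns}\subsetneq T_{g,g-1}$, since all that is known at that stage is $T_{g,ns}\neq T_g$. The sentence in the introduction asserting that $T_{g,s}\setminus(T_{g,s}\cap\overline{T_{g,ns}})$ is non-empty is a summary of results, not a self-contained argument, so it cannot carry the weight you place on it.

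What actually yields $T_{g,g}\neq\emptyset$ is the existence of a separating $(M-2)$-curve whose separating gonality equals $g$. The paper obtains this either from \cite{ref11} or from Proposition \ref{Proposition1}: condition (1) there gives $\dim|P_1+\cdots+P_{g-1}|=0$ for all $P_i\in C_i$ by the geometric Riemann--Roch theorem, and since any separating pencil of degree $g-1$ on such a curve would have topological degree $(1,\dots,1)$ and hence consist of divisors of exactly that form, the curve constructed in Proposition \ref{Proposition1} satisfies $\sepgon(X)=g$; this is spelled out in the paragraph following the proof of that proposition. Replacing your citation with this one closes the gap; the rest of your bookkeeping with Theorem \ref{Proposition3}, Corollary \ref{relatie1} and Corollary \ref{relatie3} is correct and agrees with the paper's route.
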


\begin{proof}
From Corollary \ref{corollary2} it follows that there exist
separating ($M-2$)-curves of genus $g$ of non-special type. Then
from Lemma \ref{SpecialTypeClosed} we know there exist general
separating ($M-2$)-curves of genus $g$ of non-special type. In
\cite{ref11} it is proved that $T_{g,g}\neq \emptyset$ (this is also
obtained from Proposition \ref{Proposition1}). Such curve does not
belong to $\overline{T_{g,ns}}$ hence $T_g \setminus
\overline{T_{g,ns}}\subset T_{g,s}$ is an open non-empty subset of
$T_g$ and it parameterizes general separating ($M-2$)-curves of
special type.
\end{proof}

The previous result also implies the following remarkable corollary.

\begin{corollary}\label{corollary1}
There exist dividing (M-2)-curves $X$ of genus $g\geq 4$ such that
$W^1_{g-1}(X)(\mathbb{R})$ has an isolated point.
\end{corollary}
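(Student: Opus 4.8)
The plan is to combine the finiteness result from Proposition \ref{Lemma1} with the existence result from Corollary \ref{ExistenceSpecialSepgon}. Corollary \ref{ExistenceSpecialSepgon} guarantees, for each $g\geq 3$, the existence of a separating ($M-2$)-curve $X$ of special type with $\sepgon(X)=g-1$. Restricting to $g\geq 4$, I would take such an $X$ and study its scheme $W^1_{g-1}(X)(\mathbb{R})$ of real complete linear systems of degree $g-1$ and dimension at least one.

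First I would identify the relevant points of $W^1_{g-1}(X)(\mathbb{R})$. Because $\sepgon(X)=g-1$, there is at least one separating morphism $h:X\rightarrow \mathbb{P}^1$ of degree $g-1$; as noted in the introduction and in the proof of Lemma \ref{SepgonClosed}, such a morphism is of topological type $(1,\cdots,1)$, is base point free, and its fibers have odd degree on every component of $X(\mathbb{R})$. Thus the corresponding $g^1_{g-1}$ is a genuine point of $W^1_{g-1}(X)(\mathbb{R})$, so this real locus is non-empty. This supplies the point whose isolation I must establish.

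Next I would invoke Proposition \ref{Lemma1} to pin down the local structure. Since $X$ is of special type with $\sepgon(X)=g-1$, Proposition \ref{Lemma1} asserts that every $g^1_{g-1}$ on $X$ having odd degree on each component of $X(\mathbb{R})$ is half-canonical, and moreover that there are only finitely many such linear systems associated to separating morphisms. A half-canonical $g^1_{g-1}$ is a linear system $L$ with $2L\equiv K_X$; these form a torsor under the finite group of real $2$-torsion points of $J(X)$, so there are only finitely many of them in total. Every point of $W^1_{g-1}(X)(\mathbb{R})$ arising from a separating morphism therefore lies in this finite set, which means such a point is isolated in $W^1_{g-1}(X)(\mathbb{R})$: a point belonging to a finite subset that exhausts a neighborhood's worth of the relevant locus cannot lie on a positive-dimensional real component through it.

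The main obstacle is the passage from ``finitely many separating $g^1_{g-1}$'' to ``isolated point of the real scheme $W^1_{g-1}(X)(\mathbb{R})$'': I must rule out that the chosen half-canonical point lies on a positive-dimensional real component of $W^1_{g-1}(X)(\mathbb{R})$ consisting of systems that are \emph{not} of the separating (odd-degree-on-every-component) type. Here I would argue that the separating condition is open on $W^1_{g-1}(X)(\mathbb{R})$ — having totally real fibers and the right parity is preserved under small real deformation, by the same continuity arguments used in Lemmas \ref{coverings} and \ref{SepgonClosed} — so any nearby real point would again correspond to a separating morphism of degree $g-1$ and hence again be half-canonical, contradicting finiteness unless the point is isolated. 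Once isolation is secured, the contrast with $\dim(W^1_{g-1}(X)(\mathbb{C}))=g-4\geq 0$ for $g\geq 4$ (Martens' Theorem, as cited) makes the statement meaningful, completing the proof.
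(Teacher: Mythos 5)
Your proposal is correct and follows essentially the same route as the paper: take the curve supplied by Corollary \ref{ExistenceSpecialSepgon}, observe that every real $g^1_{g-1}$ near the separating one is still complete, base point free and separating, apply Proposition \ref{Lemma1} to conclude all such nearby systems are half-canonical, and use the finiteness of half-canonical linear systems to deduce isolation. Your extra justification of that finiteness via the torsor structure under the $2$-torsion of $J(X)$ is a harmless elaboration of what the paper simply asserts.
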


\begin{proof}
Again let $X$ be a dividing (M-2)-curve of special type having
separable gonality $g-1$. From Corollary
\ref{ExistenceSpecialSepgon} we know $X$ does exist. A separating
morphism $f:X \rightarrow \mathbb{P}^1$ of degree $g-1$ corresponds
to a complete base point free $g^1_{g-1}$ on $X$, hence it belongs
to a connected component of $W^1_{g-1}(X)(\mathbb{R})$ and each
$g'^1_{g-1}$ close to $g^1_{g-1}$ is also base point free, complete
and induces a separating morphism. But from Proposition \ref{Lemma1}
it follows $g'^1_{g-1}$ has to be half-canonical. Since a curve has
only finitely many half-canonical linear systems it follows
$g^1_{g-1}$ corresponds to an isolated point of
$W^1_{g-1}(X)(\mathbb{R})$.
\end{proof}

This corollary is in sharp contrast (in case $g\geq 5$) to the fact
that the dimension of each component of $W^1_{g-1}(X_{\mathbb{C}})$
is at least $g-4$. In the final remark we explain that it seems to
indicate difficulties in studying the real gonality of real curves.

\begin{remark}
In his paper \cite{ref9} E. Ballico considers an upper bound for the
real gonality of real curves. In moving families of real curves $X$
some components of $W^1_d(\mathbb{R})$ existing on general curves
can vanish at ''transition'' curves (meaning curves having such
components but not on all curves of some neighborhood in the moduli
space; this terminology is not used in loc. cit.). In his arguments
the author proves that having such a transition curve using degree
$[(g+3)/2]$ (this is the gonality of a general complex curve of
genus $g$) then there is a real pencil of degree at most
$[g+3)/2]+3$ that propagates to all nearby real curves of it. How to
finish the argument to conclude that it propagates on a dense set of
the moduli space of real curves is not clear to me (it seems to me
there is no argument in loc. cit.). As a matter of fact, the
previous corrolary shows that such components of $W^1_d(\mathbb{R})$
can vanish in isolated points at those transition curves. In
particular those transition curves do not need to have a singular
locus of $W^1_d(X_{\mathbb{C}})$ of dimension at least 1; the basic
tool in loc. cit. is the study of complex curves having a singular
locus of some $W^1_d$ of dimension at least one (or more). The
previous corollary is the most extreme case showing what could go
wrong in the argument from \cite{ref9}. On the other hand, it is
clear that the arguments coming from \cite{ref9} had much influence
on the present paper.
\end{remark}

\begin{bibsection}
\begin{biblist}

\bib{ref10}{book}{
     author={Arbarello, E.},
    author={Cornalba, M.},
    author={Griffiths, P.},
    author={Harris, J.},
    title={Geometry of algebraic curves, Vol I},
    series={Grundlehren},
    volume={267},
    year={1985},
}
\bib{ref9}{article}{
    author={Ballico, E.},
    title={Codimension 1 subvarieties of $M_g$ and real gonality of
    real curves},
    journal={Czechoslovak Mathematical Journal},
    volume={53},
    year={2003},
    pages={917-924},
}
\bib{ref2}{article}{
    author={Coppens, M.},
    title={Lectures on linear series on real curves},
    booktitle={School in Algebraic Geometry (NIMS - S.-Korea)},
    year={2008},
    pages={1-26},
}
\bib{ref11}{article}{
    author={Coppens, M.},
    title={The separating gonality of a separating real curve},
    journal={Monatshefte f\"ur Mathematik},
    status={to appear},
}
\bib{ref7}{article}{
    author={Coppens, M.},
    author={Huisman, J.},
    title={Pencils on real curves},
    journal={Math. Nachrichten},
    status={to appear},
}
\bib{ref1}{article}{
    author={Coppens, M.},
    author={Martens, G.},
    title={Linear pencils on real algebraic curves},
    journal={Journal Pure Applied Algebra},
    volume={214},
    year={2010},
    pages={841-849},
}
\bib{ref6}{article}{
    author={Gabard, A.},
    title={Sur la representation conforme des surfaces de Riemann
\`a bord et une caract\'erisation des courbes s\'eparantes},
    journal={Comment. Math. Helv.},
    volume={81},
    year={2006},
    pages={945-964},
}
\bib{ref15}{book}{
    author={Griffiths, Ph.},
    author={Harris, J.},
    title={Principles of Algebraic Geometry},
    publisher={John Wiley \& Sons}
    year={1978},
    place={New York},
}
\bib{ref3}{article}{
    author={Gross, B.H.},
    author={Harris, J.},
    title={Real algebraic curves},
    journal={Ann. scient. Ec. Norm. Sup.},
    volume={14},
    year={1981},
    pages={157-182},
}
\bib{ref12}{article}{
    author={Grothendieck, A.},
    title={Techniques de construction et th\'eor\`emes d'existence en g\'eometrie alg\'ebrique IV Les sch\'emas de Hilbert},
    journal={S\'eminaire Bourbaki},
    volume={221}
    year={1960/61},
}
\bib{ref13}{article}{
    author={Horikawa},
    title={On deformations of holomorphic maps I},
    journal={J. Math. Soc. Japan},
    volume={25},
    year={1973},
    pages={372-396},
}
\bib{ref8}{article}{
    author={Huisman, J.},
    title={Real Teichm\"uller spaces and moduli of real algebraic
    curves},
    journal={Contemporary Mathematics},
    volume={253},
    year={2000},
    pages={145-179},
}
\bib{ref4}{article}{
    author={Sepp\'al\'a, M.},
    title={Moduli spaces of stable real algebraic curves},
    journal={Annales Scientifiques de l'ENS},
    volume={24},
    year={1991},
    pages={519-544},
}
\bib{ref14}{book}{
    author={Sernesi, E.},
    title={Deformations of algebraic schemes},
    series={Grundlehren der Math. Wiss.},
    volume={333},
    year={2006},
    publisher={Springer-Verlag},
    place={Berlin},
}

\end{biblist}
\end{bibsection}

\end{document}